\newcommand{\abs}[1]{\left|#1\right|}
\newcommand{\norm}[1]{\left|\left|#1\right|\right|}
\newcommand{\normlkp}[3]{\norm{#1}_{L^{#2,#3}}}
\newtheorem{theorem}{Theorem}[section]
\newtheorem{corollary}[theorem]{Corollary}
\newtheorem{lemma}[theorem]{Lemma}
\newtheorem{proposition}[theorem]{Proposition}
\newtheorem{conjecture}[theorem]{Conjecture}
\newtheorem*{prty1}{Property 1}{\bf}{\it}
\newtheorem*{prty2}{Property 2}{\bf}{\it}
\newtheorem*{prty1_1}{Property 1'}{\bf}{\it}
\begin{document}

\title{$L^p$ Error Estimates for Approximation by Sobolev Splines and Wendland Functions on $\mathbb{R}^d$\thanks{This paper contains work from the author's dissertation written under the supervision of Professors F. J. Narcowich and J. D. Ward at Texas A\&M University.  This research was supported by grant DMS-0807033 from the National Science Foundation.}}

\author{John Paul Ward\thanks{Biomedical Imaging Group, EPFL, CH-1015 Lausanne, Switzerland } }

\maketitle

\begin{abstract}
It is known that a Green's function-type condition may be used to derive rates for approximation by radial basis functions (RBFs).  In this paper, we introduce a method for obtaining rates for approximation by functions which can be convolved with a finite Borel measure to form a Green's function.  Following a description of the method, rates will be found for two classes of RBFs.  Specifically, rates will be found for the Sobolev splines, which are Green's functions, and the perturbation technique will then be employed to determine rates for approximation by Wendland functions.
\end{abstract}

\section{Introduction}
Radial basis function (RBF) approximation is primarily used for constructing approximants to functions that are only known at discrete sets of points.  
Some advantages of this theory are that RBF approximation methods theoretically work well in arbitrarily high dimensional spaces, where other methods break down, and ease of implementation. Even though the theory tells us that there is no dimensional bound on the applicability of RBF techniques, in practice, interpolating large sets of data coming from high dimensional spaces can be computationally expensive.  Fortunately, efficient algorithms \cite{r:pow:1,r:wright,r:barb:1} have been developed to compensate for such difficulties and allow for wider application of RBF methods.  As a result of their favorable properties and efficient implementation schemes, RBF techniques are being used to solve a variety of applied problems, some examples being problems in data mining \cite{r:dm:1,r:dm:2}, statistical learning theory \cite{r:hof}, and numerical partial differential equations. In particular, Flyer and Wright have shown that RBFs perform well in problems from mathematical geosciences, \cite{fly1,fly2}.

The initial approach to scattered data approximation on $\mathbb{R}^d$ concerned the stationary setting, with RBFs being scaled to be proportional to the fill distance.  In this paper, we will study the alternative non-stationary setting, which is relatively new and still contains many open problems. For such an approximation, one approach to proving error estimates is to exploit a Green's function representation.  For example, this technique was used by Hangelbroek for analyzing approximation by thin-plate splines on the unit disk, \cite{r:hang:1,r:hang:2}. More recently, DeVore and Ron derived rates for the approximation of $L^p$ functions by RBF spaces $S_X(\Phi)$, \cite{r:dev:1}.  Here, $S_X(\Phi)$ denotes a collection of linear combinations of translates of an RBF $\Phi$, where the translations come from a scattered discrete set of points $X \subset \mathbb{R}^d$.  In order to derive these rates, the functions $\Phi$ were required to satisfy a Green's function-type condition.  The focus of this paper will be to extend a result of DeVore and Ron to include certain non-Green's functions.  Specifically, the Wendland functions, which have some nice properties, such as compact support, and are hence popular for applications \cite{r:diag,r:gelas}, do not fit the framework of \cite{r:dev:1}.  Therefore, this paper will address a generalization of a result from \cite{r:dev:1} that will allow us to derive approximation rates for the Wendland functions.  Additionally, rates for approximation by Sobolev splines will be determined.

To begin we will prove approximation rates for functions that can be convolved with a finite Borel measure to form a Green's function.  The motivation for this approach is the success of Mhaskar, Narcowich, Prestin and Ward in deriving rates for perturbations of Green's functions on the sphere, \cite{r:bernSn}.  In fact, they were able to find rates for the Wendland functions restricted to the sphere. After establishing this general result, we will provide some examples.  To be precise, polynomial reproducing functionals will be used to show that the Sobolev splines and Wendland functions can be well approximated by local translates of themselves, and an error bound for approximation by some particular Sobolev splines and Wendland functions will be derived.

\subsection{Notation and Definitions}

The Fourier transform and Laplace transform are fundamental tools for proving many results in approximation theory.  We will use them to characterize the RBFs under consideration and to prove several results.  Throughout this paper, we will use the following conventions.  Given a function $f:\mathbb{R}^d\rightarrow \mathbb{R}$ in $L^1$, its Fourier transform $\hat{f}$ will be defined by
\begin{equation*} 
 \hat{f}(\omega)=\frac{1}{(2\pi)^{d/2}}\int_{\mathbb{R}^d} f(x)e^{-ix\cdot\omega} dx.
\end{equation*}
Given a function $f:[0,\infty)\rightarrow \mathbb{R}$ that grows no faster than $e^{at}$ for some $a>0$, its Laplace transform will be defined by
\begin{equation*} 
 \mathcal{L}f(z)=\int_0^\infty f(t)e^{-zt} dt.
\end{equation*}

Let $E$ be a subset of $\mathbb{R}^d$, then we denote by $C^k(E)$ the collection of real valued functions defined on $E$ that have continuous partial derivatives up to order $k$.  The set of functions in $C^k(\mathbb{R^d})$ that converge to 0 at infinity will be denoted by $C_0^k$,  and we represent the compactly supported elements of $C^k(\mathbb{R}^d)$ by $C_c^k$.

The Schwartz class $\mathcal{S}$ of functions on $\mathbb{R}^d$ is defined as follows.  A function $f:\mathbb{R}^d\rightarrow \mathbb{R}$ is said to be of Schwartz class if for all multi-indices $\alpha$ and $\beta$, there exists a constant $C_{\alpha,\beta}>0$ such that 
\begin{equation*}
 \abs{x^\alpha D^\beta f(x)} \leq C_{\alpha,\beta}
\end{equation*}
for all $x \in \mathbb{R}^d$.

We shall use the standard definition for the $L^p$ spaces.  A Lebesgue measurable function $f:\mathbb{R}^d\rightarrow \mathbb{R}$ is in $L^p(\mathbb{R}^d)$  for $1\leq p <\infty$ if
\begin{equation*}
 \norm{f}_p=\left(\int_{\mathbb{R}^d} \abs{f(x)}^pdx \right)^{1/p} <\infty,
\end{equation*}
and a Lebesgue measurable function $f$ is said to be in $L^\infty(\mathbb{R}^d)$ if 
\begin{equation*}
 \norm{f}_\infty=\text{ess} \sup_{x\in \mathbb{R}^d} \abs{f(x)} <\infty.
\end{equation*}
Additionally a measurable function $f$ is said to be locally integrable, denoted by $L_\text{loc}^1(\mathbb{R}^d)$, if 
\begin{equation*}
 \int_{E} \abs{f(x)}dx <\infty
\end{equation*}
for each bounded, measurable set $E \subseteq \mathbb{R}^d$.

The space of finite Borel measures on $\mathbb{R}^d$ will be denoted by $M(\mathbb{R}^d)$.  This space is equipped with a norm defined by
\begin{equation*}
 \norm{\mu}=\abs{\mu}(\mathbb{R}^d),
\end{equation*}
 where $\abs{\mu}$ is the total variation of $\mu$.  
Given a measure $\mu \in M(\mathbb{R}^d)$ and an $L^p$ function $f$, their convolution is an $L^p$ function, and it satisfies the following generalization of Young's inequality, \cite[Proposition 8.49]{r:foll:1}:
\begin{equation}\label{eq:young}
\norm{f*\mu}_p\leq \norm{f}_p\norm{\mu}.
\end{equation}

 In deriving error estimates for the Wendland functions, we will need to consider a decomposition of measures.  Specifically, any measure $\mu \in M(\mathbb{R}^d)$ can be written as $\mu=\mu_a+\mu_s+\mu_d$, where $\mu_a$ is absolutely continuous with respect to Lebesgue measure, $\mu_d$ is a countable linear combination of Dirac measures, and $\mu_s=\mu-\mu_a-\mu_d$ is the singular continuous part of $\mu$. 

Our primary focus will be approximating functions that lie in subspaces of $L^p$ spaces.  The spaces that we will be mainly interested in are the Bessel-potential spaces $L^{k,p}(\mathbb{R}^d)$, which coincide with the standard Sobolev spaces $W^{k,p}(\mathbb{R}^d)$ when $k$ is a positive integer and $1<p<\infty$, cf. \cite[Section 5.3]{r:stein}.  The Bessel potential spaces are defined by 
\begin{equation*}
 L^{k,p} = \{f:\hat{f}=(1+\norm{\cdot}_2^2)^{-k/2}\hat{g}, g  \in L^p(\mathbb{R}^d)\}
\end{equation*}
for $1 \leq p \leq \infty$, and they are equipped with the norm
\begin{equation*}
 \normlkp{f}{k}{p} = \norm{g}_p.
\end{equation*}

We will also be working with smoothness spaces associated with linear operators.  If $T:C_c^k(\mathbb{R}^d)\rightarrow C_c(\mathbb{R}^d)$ is linear,  then we define a semi-norm and norm on $C_c^\infty(\mathbb{R}^d)$ by 
\begin{eqnarray*}
 \abs{f}_{W(L^p(\mathbb{R}^d),T)} &=& \norm{Tf}_p \\
 \norm{f}_{W(L^p(\mathbb{R}^d),T)} &=& \norm{f}_p +\abs{f}_{W(L^p(\mathbb{R}^d),T)}.
\end{eqnarray*}
The completion of the space $C_c^\infty(\mathbb{R}^d)$, with respect to the above norm, will be denoted by $W(L^p(\mathbb{R}^d),T)$.  In particular, notice that for any positive integer $n$ and $1\leq p<\infty$, we have
\begin{equation*}
 W(L^p(\mathbb{R}^d),(1-\Delta)^n)=L^{2n,p}(\mathbb{R}^d)
\end{equation*}
where $\Delta$ denotes the $d-$dimensional Laplacian.

The approximants used in this paper will be finite linear combinations of translates of an RBF $\Phi$, and the translations will come from a countable set $X \subset \mathbb{R}^d$.  The error of this approximation, which is measured in a Sobolev-type norm, depends on both the function $\Phi$ and the set $X$.  Therefore, given an RBF $\Phi$ and a set $X$, we define the RBF approximation space $S_X(\Phi)$ by 
\begin{equation*}
 S_X(\Phi) = \left\{ \sum_{\xi \in Y} a_\xi \Phi(\cdot-\xi): Y \subset X, \#Y<\infty\right\}\cap L^1(\mathbb{R}^d).
\end{equation*}
Note that the functions $\Phi$ and sets $X$ will be required to satisfy certain conditions so that we may prove results about rates of approximation.  The error bounds will be stated in terms of the fill distance
\begin{equation*}
   h_X = \sup_{x\in\mathbb{R}^d} \inf_{\xi\in X}\norm{x-\xi}_2,
\end{equation*}
which measures how far a point in $\mathbb{R}^d$ can be from $X$, and we additionally require the separation radius 
\begin{equation*}
  q_X = \frac{1}{2}\inf_{\genfrac{}{}{0pt}{}{\xi,\xi'\in X}{\xi \neq \xi'}}\norm{\xi-\xi'}_2
\end{equation*}
to be bounded in order to prevent accumulation points in $X$. We will therefore be working with sets for which the mesh ratio $\rho_X:= h_X/q_X$ is bounded by a constant;  such sets are called quasi-uniform.

\subsection{Radial Basis Functions}

The two classes of RBFs that we will be interested in are the Sobolev splines and the Wendland functions.  The Sobolev Splines $G_\gamma:\mathbb{R}^d\rightarrow\mathbb{R}$ form a class of radial basis functions most easily defined in terms of their Fourier transforms:
\begin{equation*}
 \hat{G}_\gamma(\omega):=(1+\norm{\omega}_2^2)^{-\gamma/2}
\end{equation*}
for $\gamma>0$.  Notice that $G_\gamma$ is the Green's function for the (pseudo-)differential operator $(1-\Delta)^{\gamma/2}$.  These functions have been studied extensively in \cite{r:aron:1,r:aron:2}.  In these papers, it is shown that $G_\gamma$ is an analytic function except at 0, and several useful asymptotic approximations are determined.  Those necessary for our results are listed in the following proposition.

\begin{proposition}\label{pr:ss2}\textbf{\em (\cite[p.253]{r:aron:2})}
 For any multi-index $\alpha$, there are positive constants $C$ such that
\begin{equation*}
\abs{D^\alpha G_\gamma(x)}\leq
\begin{cases}
 C_{\gamma} \norm{x}_2^{\gamma-d-\abs{\alpha}} & \text{ for } \abs{\alpha} \geq \gamma - d \text{ and } \abs{\alpha} \text{ odd } \\
 C_{\gamma,\alpha,d} \norm{x}_2^{\gamma-d-\abs{\alpha}} & \text{ for } \abs{\alpha} > \gamma - d \text{ and } \abs{\alpha} \text{ even } \\
 C & \text{ for } \abs{\alpha} \leq \gamma - d \text{ and } \abs{\alpha} \text{ odd } \\
 C_{\gamma,\alpha,d}  & \text{ for } \abs{\alpha} < \gamma - d \text{ and } \abs{\alpha} \text{ even } 
\end{cases}
\end{equation*}
\end{proposition}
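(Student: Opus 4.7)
The plan is to reduce the pointwise estimates on $D^\alpha G_\gamma$ to the well-documented asymptotic behavior of the Macdonald function $K_\nu$, by way of the classical closed-form representation of the Bessel-potential kernel.

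First I would establish the identity
\[
 G_\gamma(x) \;=\; c_{d,\gamma}\,\norm{x}_2^{(\gamma-d)/2}\, K_{(d-\gamma)/2}\bigl(\norm{x}_2\bigr),
\]
which can be verified by applying the subordination formula
\[
 (1+\norm{\omega}_2^2)^{-\gamma/2} \;=\; \frac{1}{\Gamma(\gamma/2)}\int_0^\infty t^{\gamma/2-1}\, e^{-t}\, e^{-t\norm{\omega}_2^2}\, dt,
\]
inverting the Fourier transform of the Gaussian under the integral sign, and recognising the resulting radial one-variable integral as a standard representation of $K_\nu$.

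Second I would invoke the classical asymptotic expansions of $K_\nu$: near the origin, $K_\nu(r)$ is a linear combination of power series in the shifted exponents $r^{-|\nu|+2k}$ and $r^{|\nu|+2k}$ (with $\log r$ corrections when $\nu$ is an integer); at infinity, $K_\nu$ and all its derivatives decay like $r^{-1/2}e^{-r}$. Substituting into the formula above shows that the radial profile $g(r):= c_{d,\gamma}\, r^{(\gamma-d)/2} K_{(d-\gamma)/2}(r)$ splits, near $0$, into a genuinely singular piece with leading exponent $r^{\gamma-d}$ plus an analytic power series in $r^2$, and that both pieces and all their derivatives decay exponentially at infinity. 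In particular, $|g^{(k)}(r)| \leq C r^{\gamma-d-k}$ for small $r$ whenever $k\geq \gamma-d$, and $g^{(k)}$ stays bounded when $k<\gamma-d$.

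Third I would differentiate $G_\gamma(x)=g(\norm{x}_2)$ via Faà di Bruno's formula, obtaining
\[
 D^\alpha G_\gamma(x) \;=\; \sum_{k=1}^{|\alpha|} g^{(k)}\bigl(\norm{x}_2\bigr)\, Q_{\alpha,k}(x),
\]
where each $Q_{\alpha,k}$ is a rational expression in $x$ that is homogeneous of degree $k-|\alpha|$. Plugging in the bounds on $g^{(k)}$ produces the $\norm{x}_2^{\gamma-d-|\alpha|}$ estimate in the two unbounded cases, while in the small-$|\alpha|$ cases the analytic part and the positive-exponent remainders of the singular part combine to yield a uniform bound; the exponentially decaying tail of $g$ handles $|x|$ away from the origin in every case.

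The main obstacle is the parity-based case split. I would handle it by tracking which terms in the Faà di Bruno expansion interact with the genuinely singular part of $g$ versus its analytic remainder: because the singular piece of $g$ contains only the shifted-even exponents $r^{\gamma-d+2j}$, one order of differentiation of odd total order consumes one of these singular terms cleanly, while even-order derivatives produce cross-terms with coefficients that depend nontrivially on $\alpha$ and $d$. The excluded borderline case $|\alpha|=\gamma-d$ with $|\alpha|$ even is precisely where the two asymptotic series of $K_\nu$ collide and a logarithmic singularity in $g^{(|\alpha|)}(r)$ appears, which is why that case falls outside the stated bounds.
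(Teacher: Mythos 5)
The paper offers no proof of this proposition at all: it is imported verbatim from Aronszajn--Szeptycki \cite[p.253]{r:aron:2}, so there is nothing internal to compare against. Your reconstruction follows what is essentially the classical route of that reference (and of Aronszajn--Smith): the subordination identity, the resulting Macdonald-function representation $G_\gamma(x)=c_{d,\gamma}\norm{x}_2^{(\gamma-d)/2}K_{(d-\gamma)/2}(\norm{x}_2)$, the two asymptotic regimes of $K_\nu$, and a radial chain-rule expansion. The identity, the $K_\nu$ asymptotics, the exponential decay at infinity, and your diagnosis of the excluded borderline case ($\abs{\alpha}=\gamma-d$ even is exactly where $(d-\gamma)/2$ is an integer and the logarithmic term in $K_\nu$ survives differentiation) are all correct.

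The one step that does not work as literally written is ``plugging in the bounds on $g^{(k)}$'' term by term in the Fa\`a di Bruno sum for the unbounded cases. For $k<\gamma-d\leq\abs{\alpha}$ you only know $g^{(k)}$ is bounded, while $Q_{\alpha,k}$ is of order $\norm{x}_2^{k-\abs{\alpha}}$, so that term is apparently \emph{more} singular than the target $\norm{x}_2^{\gamma-d-\abs{\alpha}}$; the offending contributions cancel, but you cannot see this from one-sided bounds on the radial derivatives alone. The fix is the device you only invoke later for the parity discussion: split $g=g_{\mathrm{an}}+g_{\mathrm{sing}}$ \emph{before} differentiating, where $g_{\mathrm{an}}$ is an even power series in $r$ (hence $g_{\mathrm{an}}(\norm{x}_2)$ is smooth in $x$ with all derivatives bounded near $0$) and $g_{\mathrm{sing}}(r)=r^{\gamma-d}\times(\text{even power series})$, whose $x$-derivatives obey the homogeneous bound $C\norm{x}_2^{\gamma-d-\abs{\alpha}}$ directly. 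With that reordering the argument closes; the precise parity-dependence of the constants is immaterial to how the paper uses the estimate.
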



Next, the Wendland functions compose a class of compactly supported RBFs that are radially defined as piecewise polynomials, and some examples are provided in Table \ref{tab:1}. Wendland's book \cite{r:wen:2} (particularly chapter 9) provides a detailed analysis of these functions and some of their approximation properties; however, an exact form for the Fourier transforms of these functions has not previously been found. Since we will need this information for our approximation analysis, we begin by computing the Fourier transforms of these functions. The Wendland functions $\Phi_{d,k}$ are determined by a dimension parameter $d$ and a smoothness parameter $k$, and they lie in $C^{2k}(\mathbb{R}^d)$. In Table \ref{tab:1} and in what follows, $r$ will be used to denote $\norm{x}_2$, and $\doteq$ will be used to indicate equality up to some positive constant factor. 

\begin{table} 
\caption{Examples of Wendland functions}
\label{tab:1}  
 \begin{tabular}{l c}
  \hline \hline
 Function & Smoothness \\ \hline
 $\Phi_{1,0}(x)=(1-r)_{+}$  & $C^0$ \\
 $\Phi_{1,1}(x)\doteq(1-r)_{+}^3(3r+1)$  & $C^2$ \\
 $\Phi_{1,2}(x)\doteq(1-r)_{+}^5(8r^2+5r+1)$  & $C^4$ \\ \hline
 $\Phi_{3,0}(x)=(1-r)_{+}^2$  & $C^0$ \\
 $\Phi_{3,1}(x)\doteq(1-r)_{+}^4(4r+1)$  & $C^2$ \\
 $\Phi_{3,2}(x)\doteq(1-r)_{+}^6(35r^2+18r+3)$  & $C^4$ \\
 $\Phi_{3,3}(x)\doteq(1-r)_{+}^8(32r^3+25r^2+8r+1)$  & $C^6$ \\ 
   \hline \hline
 \end{tabular}
\end{table}

We will now derive an explicit form of the Fourier transform of $\Phi_{d,k}$ in the case $d$ is odd.  Using the notation of \cite[Section 10.5]{r:wen:2}, let $d=2n+1$ and $m=n+k$.  Then by \cite[Lemma 6.19]{r:wen:2} and the definition of $\Phi_{d,k}$, we have
\begin{equation}\label{eq:wft}
 \hat{\Phi}_{d,k}(x)= B_mf_m(r)r^{-3m-2} 
\end{equation}
where $B_m$ is a positive constant and the Laplace transform of $f_m$ satisfies
\begin{equation*}
 \mathcal{L}f_m(r)=\frac{1}{r^{m+1}(1+r^2)^{m+1}}.
\end{equation*}
In order to find the inverse Laplace transform of the above expression, we will make use of partial fractions.  First, note that there exist constants $\alpha_j$, $\beta_j$, and $\gamma_j$ such that 
\begin{equation}\label{eq:lt_1}
 \frac{1}{s^{m+1}(1+s^2)^{m+1}} = \sum_{j=0}^{m} \frac{\alpha_j}{s^{j+1}} + \sum_{j=0}^{m} \frac{\beta_j}{(s+i)^{j+1}} + \sum_{j=0}^{m} \frac{\gamma_j}{(s-i)^{j+1}},
\end{equation}
and this decomposition is unique.  Now for any real $s$, the expression on the left is real.  Therefore, taking the complex conjugate of both sides, we get
\begin{equation*}
 \frac{1}{s^{m+1}(1+s^2)^{m+1}} = \sum_{j=0}^{m} \frac{\bar{\alpha}_j}{s^{j+1}} + \sum_{j=0}^{m} \frac{\bar{\beta}_j}{(s-i)^{j+1}} + \sum_{j=0}^{m} \frac{\bar{\gamma}_j}{(s+i)^{j+1}}.
\end{equation*}
Uniqueness of the decomposition then implies that for each $j$: 
\begin{center}
\begin{enumerate}
 \item [(i)]  $\alpha_j$ is real
 \item [(ii)] $\beta_j=\bar{\gamma}_j$.
\end{enumerate}
\end{center}

To further characterize the coefficients, we replace $s$ by $-s$ in (\ref{eq:lt_1}). First, we have
\begin{equation*}
 \frac{(-1)^{m+1}}{s^{m+1}(1+s^2)^{m+1}} = \sum_{j=0}^{m} \frac{(-1)^{j+1}\alpha_j}{s^{j+1}} + \sum_{j=0}^{m} \frac{(-1)^{j+1}\beta_j}{(s-i)^{j+1}} + \sum_{j=0}^{m} \frac{(-1)^{j+1}\bar{\beta}_j}{(s+i)^{j+1}},
\end{equation*}
and therefore
\begin{equation}
 \frac{1}{s^{m+1}(1+s^2)^{m+1}} = \sum_{j=0}^{m} \frac{(-1)^{j+m}\alpha_j}{s^{j+1}} + \sum_{j=0}^{m} \frac{(-1)^{j+m}\beta_j}{(s-i)^{j+1}} + \sum_{j=0}^{m} \frac{(-1)^{j+m}\bar{\beta}_j}{(s+i)^{j+1}}.
\end{equation}
Again using the uniqueness of the partial fraction decomposition, it follows that
\begin{enumerate}
 \item [(i)] $(-1)^{j+m}\alpha_j=\alpha_j$
 \item [(ii)] $(-1)^{j+m}\beta_j=\bar{\beta}_j$
\end{enumerate}
for each $j$.  The first property implies that $\alpha_j=0$ for either all odd $j$ or all even $j$.  The second property tell us that $\beta_j$ is real when $j+m$ is even, and it is imaginary when $j+m$ is odd. 

We now compute $f_m$ as the inverse Laplace transform of the sum in (\ref{eq:lt_1}).
\begin{equation*}
 f_m(r) = \sum_{j=0}^{m} \frac{\alpha_j}{j!} r^j+ \sum_{j=0}^{m}\frac{\beta_j}{j!} r^je^{-ir}  + \sum_{j=0}^{m}\frac{\bar{\beta}_j}{j!} r^je^{ir}
\end{equation*}
Now if $m$ is odd, we have $m=2l+1$ and
 \begin{eqnarray*}
 f_{m}(r) = \sum_{j=0}^{l} \frac{\alpha_{2j+1}}{(2j+1)!} r^{2j+1} &+& \sum_{j=0}^{l}\frac{\beta_{2j+1}}{(2j+1)!} r^{2j+1}(e^{-ir}+e^{ir}) \\
 &-& \sum_{j=0}^{l}\frac{\beta_{2j}}{(2j)!} r^{2j}(-e^{-ir}+e^{ir}), 
\end{eqnarray*}
which can be simplified to
 \begin{equation*}
 f_{2l+1}(r) = \sum_{j=0}^{l} \frac{\alpha_{2j+1}}{(2j+1)!} r^{2j+1}+ \sum_{j=0}^{l}\frac{2\beta_{2j+1}}{(2j+1)!} r^{2j+1}\cos(r)  - \sum_{j=0}^{l}\frac{2i\beta_{2j}}{(2j)!} r^{2j}\sin(r). 
\end{equation*}
Similarly, when $m=2l$ we have
 \begin{equation*}
 f_{2l}(r) = \sum_{j=0}^{l} \frac{\alpha_{2j}}{(2j)!} r^{2j}- \sum_{j=0}^{l-1}\frac{2i\beta_{2j+1}}{(2j+1)!} r^{2j+1}\sin(r)  + \sum_{j=0}^{l}\frac{2\beta_{2j}}{(2j)!} r^{2j}\cos(r). 
\end{equation*}

\begin{lemma}
 Let $\Phi_{d,k}$ be a Wendland function with $d$ odd, and define $n$ and $m$ by $d=2n+1$ and $m=n+k$. Then the exact form of the Fourier transform of $\Phi_{d,k}$ is found by substituting the above representations of $f_m$ into $\hat{\Phi}_{d,k}(x)= B_mf_m(r)r^{-3m-2}$.
\end{lemma}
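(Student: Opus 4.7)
The plan is essentially to assemble the pieces already worked out in the discussion preceding the lemma and verify that nothing is lost in the bookkeeping. The starting point is Wendland's identity in equation (\ref{eq:wft}), i.e.\ $\hat{\Phi}_{d,k}(x) = B_m f_m(r)\, r^{-3m-2}$ with $d=2n+1$ and $m=n+k$, where $f_m$ is determined by $\mathcal{L}f_m(r) = r^{-(m+1)}(1+r^2)^{-(m+1)}$. Since the right-hand side is a rational function in the Laplace variable with poles only at $0$, $i$, and $-i$ (each of order $m+1$), the partial fraction decomposition (\ref{eq:lt_1}) is valid and, by standard uniqueness of partial fractions, the coefficients $\alpha_j,\beta_j,\gamma_j$ are uniquely determined. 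That reduces computing $f_m$ to inverting each simple term via the identities $\mathcal{L}^{-1}\{s^{-(j+1)}\} = r^j/j!$ and $\mathcal{L}^{-1}\{(s\mp i)^{-(j+1)}\} = r^j e^{\pm i r}/j!$.

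Next, I would justify the symmetry reductions on the coefficients that simplify $f_m$ into the real trigonometric form stated. The relations $\alpha_j \in \mathbb{R}$ and $\gamma_j = \bar{\beta}_j$ follow from applying complex conjugation to the decomposition of a real-valued function and invoking uniqueness, exactly as in the paragraph preceding the lemma. The parity relations $(-1)^{j+m}\alpha_j = \alpha_j$ and $(-1)^{j+m}\beta_j = \bar\beta_j$ follow analogously by substituting $s \mapsto -s$ and using that $\frac{1}{s^{m+1}(1+s^2)^{m+1}}$ picks up a factor $(-1)^{m+1}$ while the individual terms transform as indicated, again invoking uniqueness. From these two sets of relations one reads off that the only nonzero $\alpha_j$ have $j$ of the opposite parity to $m$, that $\beta_j$ is real when $j+m$ is even, and that $\beta_j$ is purely imaginary when $j+m$ is odd.

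Finally, I would split into the two cases $m=2l+1$ and $m=2l$, substitute the parity information into the termwise inverse Laplace transform, and collect the conjugate pairs $e^{\pm ir}$ using Euler's formula. In the odd case only $\alpha_{2j+1}$ survives from the polynomial part, the $\beta_{2j+1}$ combine with $(e^{-ir}+e^{ir})/2$ to yield a $\cos(r)$ term, and the $\beta_{2j}$ combine with $(e^{ir}-e^{-ir})/2$ to yield a $\sin(r)$ term; the even case is symmetric with the roles of odd/even indices swapped. This is exactly the pair of displayed formulas for $f_{2l+1}$ and $f_{2l}$ appearing just before the lemma, so substituting them into $\hat{\Phi}_{d,k}(x) = B_m f_m(r) r^{-3m-2}$ completes the proof.

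There is no real obstacle here: the lemma is in effect a packaging statement that records the output of the preceding derivation. The only point that needs any care is the case analysis by parity of $m$ and correctly tracking which $\alpha_j$ and $\beta_j$ vanish, since a sign or parity error there would propagate into a wrong trigonometric expansion. Everything else is routine inverse Laplace transformation of a partial fraction decomposition.
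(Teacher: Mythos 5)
Your proposal follows the paper's own derivation essentially verbatim: the partial fraction decomposition of $\mathcal{L}f_m$, uniqueness under complex conjugation and under $s\mapsto -s$ to pin down the coefficient symmetries, and termwise inverse Laplace transforms collected via Euler's formula into the displayed forms of $f_{2l+1}$ and $f_{2l}$. One small slip: the relation $(-1)^{j+m}\alpha_j=\alpha_j$ forces $\alpha_j=0$ precisely when $j$ and $m$ have \emph{opposite} parity, so the surviving $\alpha_j$ have $j$ of the \emph{same} parity as $m$ --- which is consistent with your later, correct statement that only $\alpha_{2j+1}$ survives when $m=2l+1$, but contradicts your ``opposite parity'' sentence.
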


We can now determine properties of $\hat{\Phi}_{d,k}$ that will be required for the error bounds.  The proof is omitted as the result follows easily from the formulas computed for $f_m$.
\begin{proposition}\label{pr:wend_ft_1}
 Let $d$ and $k$ be non-negative integers with $d$ odd, and consider $\hat{\Phi}_{d,k}(x)= B_mf_m(r)r^{-3m-2}$.  Then $(\cdot)^{-3m-2}f_m$ is an analytic function, and 
\begin{equation*}
  r^{-3m-2}f_m(r)= O(r^{-2m-2})
\end{equation*}
as $r\rightarrow \infty$.
\end{proposition}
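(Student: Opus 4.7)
The plan is to deduce both claims from the explicit forms of $f_m$ combined with the Laplace transform identity $\mathcal{L}f_m(s) = s^{-(m+1)}(1+s^2)^{-(m+1)}$.

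The decay bound can be read off directly from the closed forms of $f_{2l+1}$ and $f_{2l}$: each is a linear combination of terms of the type $r^j$, $r^j\cos r$, and $r^j\sin r$ with $0\leq j\leq m$. Since $\cos$ and $\sin$ are bounded, $f_m(r)=O(r^m)$ as $r\to\infty$, and so $r^{-3m-2}f_m(r)=O(r^{-2m-2})$.

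For analyticity, the same explicit formulas show that $f_m$ extends to an entire function of $r$ (each $r^j e^{\pm i r}$ does), so $r^{-3m-2}f_m(r)$ is meromorphic with at worst a pole of order $3m+2$ at the origin. It therefore suffices to prove that $f_m$ vanishes to order at least $3m+2$ at $r=0$. To do this, I would expand the Laplace transform for $s>1$ as
\begin{equation*}
\mathcal{L}f_m(s) = \frac{1}{s^{m+1}(1+s^2)^{m+1}} = \sum_{k=0}^\infty \binom{-m-1}{k}\frac{1}{s^{3m+3+2k}},
\end{equation*}
and define the candidate function
\begin{equation*}
g(r) = r^{3m+2}\sum_{k=0}^\infty \frac{\binom{-m-1}{k}}{(3m+2+2k)!}r^{2k},
\end{equation*}
which is entire because the factorial denominators decay super-exponentially. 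A termwise Laplace transform, justified for $s>1$ by dominated convergence together with the crude estimate $|\binom{-m-1}{k}|\leq C(1+k)^{m+1}$, then gives $\mathcal{L}g(s)=\mathcal{L}f_m(s)$ on $\{s>1\}$. Uniqueness of the Laplace transform yields $f_m=g$ on $[0,\infty)$, and therefore
\begin{equation*}
r^{-3m-2}f_m(r) = \sum_{k=0}^\infty \frac{\binom{-m-1}{k}}{(3m+2+2k)!}r^{2k},
\end{equation*}
which is an even entire function of $r$.

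The principal technical item is the dominated-convergence step legitimizing the termwise Laplace transform of $g$; everything else is bookkeeping. A cleaner alternative, if one is willing to import extra machinery, is to invoke Paley--Wiener: since $\Phi_{d,k}$ has compact support, $\hat{\Phi}_{d,k}$ extends to an entire function on $\mathbb{C}^d$, and radial symmetry forces its profile $r\mapsto r^{-3m-2}f_m(r)$ to be real-analytic on $\mathbb{R}$, which would bypass the explicit power-series computation entirely.
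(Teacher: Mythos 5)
Your argument is correct, and it supplies a proof that the paper actually omits (the text says only that the proposition ``follows easily from the formulas computed for $f_m$''). Your decay estimate is exactly the intended one-line reading of the closed forms of $f_{2l+1}$ and $f_{2l}$: every term is $r^j$, $r^j\cos r$, or $r^j\sin r$ with $j\leq m$, so $f_m(r)=O(r^m)$ and the $r^{-2m-2}$ bound follows. For the analyticity at the origin, you diverge from the paper's implicit route. The paper's mechanism (made explicit in its proof of the $d=1$ Proposition on $\hat{\Phi}_{1,k}$) is indirect: $\Phi_{d,k}\in L^1$ forces $\hat{\Phi}_{d,k}$, hence $r^{-3m-2}f_m(r)$, to be bounded near $0$, and since $f_m$ is entire this means it vanishes to order at least $3m+2$, so the quotient is analytic. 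You instead compute the full Taylor expansion of $f_m$ at $0$ by termwise inversion of the geometric-type expansion of $s^{-m-1}(1+s^2)^{-m-1}$ in powers of $s^{-1}$, justified by absolute convergence and Lerch uniqueness. Your route is more self-contained (it does not lean on properties of $\Phi_{d,k}$ at all) and yields strictly more: the explicit even power series $\sum_k \binom{-m-1}{k} r^{2k}/(3m+2+2k)!$, whose evenness is in fact the right condition for the radial function $\hat{\Phi}_{d,k}$ on $\mathbb{R}^d$ to be analytic, a point the bare statement glosses over. A quick sanity check at $m=0$, where $f_0(r)=1-\cos r$, confirms your series. The Paley--Wiener alternative you mention is also sound and is essentially a packaged version of the paper's indirect argument.
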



Let us now take a closer look at the 1-dimensional case.
\begin{proposition}\label{pr:wend_ft}
 If $k \in \mathbb{N}$, then there exists $C\in\mathbb{R}$ and $h\in L^1(\mathbb{R})$ such that
\begin{equation}\label{eq:wend_1d_eq}
 x^{2k+2}\hat{\Phi}_{1,k}(x)= B_k\left(\frac{1}{k!}+\frac{(-1)^{k+1}}{k!2^k}\cos(x)+C\frac{\sin(x)}{x}+\hat{h}(x)\right),
\end{equation}
where $\hat{h}$ is the Fourier transform of $h$.
\end{proposition}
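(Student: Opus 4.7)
My plan is to substitute the explicit formulas for $f_k$ derived above into the representation \eqref{eq:wft}, identify the three slowly-decaying pieces by inspecting the top-degree terms of the polynomial, cosine, and sine series, and then show that the remainder is the Fourier transform of an $L^1$ function via an integration-by-parts argument.

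In the one-dimensional case $d=1$ we have $n=0$ and $m=k$, so \eqref{eq:wft} combined with $\abs{x}^{2k+2}=x^{2k+2}$ gives $x^{2k+2}\hat\Phi_{1,k}(x)=B_k\abs{x}^{-k}f_k(\abs{x})$. Direct residue computations in \eqref{eq:lt_1} yield the leading partial-fraction coefficients
\[
\alpha_k=\lim_{s\to 0}\frac{s^{k+1}}{s^{k+1}(1+s^2)^{k+1}}=1,\qquad \beta_k=\lim_{s\to -i}\frac{(s+i)^{k+1}}{s^{k+1}(s-i)^{k+1}(s+i)^{k+1}}=\frac{(-1)^{k+1}}{2^{k+1}}.
\]
Substituting the formula for $f_k$ (odd or even case) into $\abs{x}^{-k}f_k(\abs{x})$ and extracting the highest-degree contribution of each of the three series produces the constant $\alpha_k/k!=1/k!$; the leading cosine $(2\beta_k/k!)\cos\abs{x}=\tfrac{(-1)^{k+1}}{k!2^k}\cos x$; and the term $C\sin\abs{x}/\abs{x}$ with $C:=-2i\beta_{k-1}/(k-1)!$, coming from the unique sine-series index producing $r^{-1}\sin r$ after division by $r^k$. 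The constant $C$ is real because the identity $\bar\beta_j=(-1)^{j+m}\beta_j$ applied with $j=k-1$ and $m=k$ forces $\beta_{k-1}$ to be purely imaginary. All remaining terms collect into a finite sum
\[
R(x)=\sum_{j\geq 1}\bigl(a_j\abs{x}^{-2j}+b_j\abs{x}^{-2j}\cos\abs{x}+c_j\abs{x}^{-2j-1}\sin\abs{x}\bigr),
\]
with real coefficients $a_j,b_j,c_j$ (by the same parity properties of the partial-fraction data).

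It remains to produce $h\in L^1(\mathbb R)$ with $\hat h=R$. First, $R$ is $C^\infty$ on $\mathbb R$: since $\Phi_{1,k}$ is compactly supported and integrable, $\hat\Phi_{1,k}$ extends to an entire function by Paley--Wiener, and the subtracted functions $1/k!$, $\cos x$, $\sin(x)/x$ are entire as well, so the apparent singularities at $x=0$ of the individual summands of $R$ must cancel. At infinity, every remaining summand is $O(\abs{x}^{-2})$, and direct differentiation shows that $R'$ and $R''$ are again finite sums of terms $\abs{x}^{-p}$, $\abs{x}^{-p}\cos\abs{x}$, $\abs{x}^{-p}\sin\abs{x}$ with $p\geq 2$; hence $R,R',R''\in L^1(\mathbb R)$. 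Define
\[
h(x)=\frac{1}{\sqrt{2\pi}}\int_{\mathbb R} R(\omega)e^{ix\omega}\,d\omega,
\]
which is bounded and continuous. Two integrations by parts give $x^2 h(x)=-\tfrac{1}{\sqrt{2\pi}}\int R''(\omega)e^{ix\omega}\,d\omega$, which is bounded as the inverse Fourier transform of an $L^1$ function; combined with the bound on $h$ from $R\in L^1$, this yields $h(x)=O(1/(1+x^2))\in L^1(\mathbb R)$. Fourier inversion (valid since $R$ is continuous and $R,h\in L^1$) then gives $\hat h=R$, completing the proof.

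The main obstacle is the bookkeeping needed to verify that the summands of $R$, together with their first and second derivatives, all decay as $O(\abs{x}^{-2})$ at infinity; this is routine because each differentiation of $\abs{x}^{-p}\cos\abs{x}$ or $\abs{x}^{-p}\sin\abs{x}$ either increases $p$ or swaps $\sin\leftrightarrow\cos$, so the tail is preserved.
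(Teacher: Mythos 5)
Your proof is correct and follows essentially the same route as the paper: residue evaluation of the leading partial-fraction coefficients $\alpha_k=1$ and $\beta_k=(-1)^{k+1}/2^{k+1}$, extraction of the constant, cosine, and $\sin(x)/x$ terms, and the standard criterion that a continuous remainder with two continuous derivatives in $L^1$ is the Fourier transform of an $L^1$ function (which the paper cites from Folland and you prove inline via integration by parts). Note that your value of $\beta_k$ is the one consistent with the coefficient $(-1)^{k+1}/(k!2^k)$ appearing in \eqref{eq:wend_1d_eq}; the paper's intermediate claims $\beta_m=(-1)^m/2^{m+1}$ and $b_l=(-1)^k/(k!2^{k+1})$ contain sign/factor slips that your residue computation corrects.
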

\begin{proof}
 Since the case where $k$ is odd is similar to the case where $k$ is even, we will only prove the former. Recall that $\hat{\Phi}_{1,k}(x)= B_kf_k(r)r^{-3k-2}$, so let us begin by examining the function $f_k$.  For $k=2l+1$ and some constants $a_j$, $b_j$, and $c_j$, we have 
\begin{equation}\label{eq:decay}
 r^{-1}f_k(r) = \sum_{j=0}^{l} a_j r^{2j}+ \sum_{j=0}^{l}b_j r^{2j}\cos(r)  - \sum_{j=0}^{l}c_j r^{2j}\frac{\sin(r)}{r}.
\end{equation}
We can then define an analytic function $\tilde{f}_k:\mathbb{R}\rightarrow \mathbb{R}$ by 
\begin{equation*}
 \tilde{f}_k(x) = \abs{x}^{-1}f_k(\abs{x}),
\end{equation*}
and we will have $x^{2k+2}\hat{\Phi}_{1,k}(x)= B_kx^{-k+1}\tilde{f}_k(x)$.  Now since $\Phi_{1,k} \in L^1(\mathbb{R})$ and $\hat{\Phi}_{1,k}(x)= B_kf_k(r)r^{-3k-2}$,  $\tilde{f}_k(x)$ must have a zero of order $3k+1$ at 0, and therefore $\tilde{f}_k(x)$ has a power series of the form
\begin{equation}\label{eq:ftil_1}
 \tilde{f}_k(x) = \sum_{j=3k+1}^\infty d_j x^j.
\end{equation}

In order to verify \eqref{eq:wend_1d_eq}, we first need to determine some of the coefficients in \eqref{eq:decay}.  From our previous work, we know that $c_l\in \mathbb{R}$, and we can find $a_l$ and $b_l$ as follows.  In the partial fraction decomposition (\ref{eq:lt_1}), multiply both sides by $s^{m+1}(1+s^2)^{m+1}$.  By substituting the values $s=0$ and $s=-i$, we find that $\alpha_m=1$ and $\beta_m=(-1)^m/2^{m+1}$, and therefore $a_l= 1/k!$ and $b_l=(-1)^k/(k!2^{k+1})$.

We can now finish the proof by showing that 
\begin{equation*}
  \tilde{h}_k(x):=x^{-k+1}\tilde{f}_k(x)- \left(\frac{1}{k!}+\frac{(-1)^{k+1}}{k!2^k}\cos(x)-c_l\frac{\sin(x)}{x}\right)
\end{equation*}
is the Fourier transform of an $L^1$ function. Since $\tilde{h}_k(x)$ is identically 0 for $k=1$, we need only consider $k\geq 3$.  This can be verified by determining that $\tilde{h}_k(x)$ has two continuous derivatives in $L^1$, cf. \cite[p. 219]{r:foll:2}.  Considering the representation \eqref{eq:ftil_1}, it is clear that $\tilde{h}_k(x)$ has two continuous derivatives, and the decay of these functions can be bounded using \eqref{eq:decay}.
\qed
\end{proof}


\section{General Error Estimates}

In the paper \cite{r:dev:1}, the authors made use of a Green's function-type condition in order to determine rates of approximation, cf. \cite[Property A2]{r:dev:1}.  They were concerned with approximating functions in $L^p(\mathbb{R}^d)$ by approximation spaces $S_X(\Phi)$, where $X$ has no accumulation points and $h_X$ is finite.  The essence of their argument is as follows. 

\begin{prty1}
 Suppose $T:C_c^k(\mathbb{R}^d)\rightarrow C_c(\mathbb{R}^d)$ is a linear operator, $\Phi\in L_\text{loc}^1(\mathbb{R}^d)$, and for any $f \in C_c^k(\mathbb{R}^d)$ we have 
\begin{equation}\label{eq:grpr1}
 f = \int_{\mathbb{R}^d}Tf(t)\Phi(\cdot-t)dt.
\end{equation}
\end{prty1}
Now, given an $f \in C_c^k(\mathbb{R}^d)$, we can form an approximant by replacing $\Phi$ with a suitable kernel in (\ref{eq:grpr1}).  Since we are interested in approximating $f$ by $S_X(\Phi)$, the kernel should have the form
\begin{equation}\label{eq:grk}
 K(\cdot,t) = \sum_{\xi \in X(t)} A(t,\xi) \Phi(\cdot-\xi).
\end{equation}
The collection of possible kernels is restricted by requiring 
\begin{equation*}
 X(t) \subset B(t,Ch_X)\cap X
\end{equation*}
 for a constant $C>0$ and additionally requiring $A(t,\xi)$ to be in $L^1$ for all $\xi$.  An essential ingredient for deriving approximation rates is showing that $\Phi$ can be well approximated by $K$.  We therefore define the error kernel
\begin{equation}
 E(x,t):=\Phi(x-t)-K(x,t).
\end{equation}
and assume the following property, cf. \cite[Property A4]{r:dev:1}.

\begin{prty2}
 Given $\Phi$, there exists a kernel $K$ of the form (\ref{eq:grk}) and constants $l>d$, $\kappa>0$, and $C>0$ such that
\begin{equation*}
 \abs{E(x,t)} \leq C h_X^{\kappa-d} \left(1+\frac{\norm{x-t}_2}{h_X}\right)^{-l}
\end{equation*}
for all $x,t\in \mathbb{R}^d$.
\end{prty2}

While this approach can be used to provide estimates for some popular RBFs, e.g. the thin-plate splines, not all RBFs are Green's functions.  Therefore our goal will be to extend the class of applicable RBFs.  Note that in \cite{r:dev:1}, the authors were concerned with proving error bounds that account for the local density of the data sites.  As our goal is to extend the class of applicable RBFs, we will not discuss this more technical approach; instead we will assume $X$ is quasi-uniform, and our bound will be written in terms of the global density parameter $h_X$.

We will now show that it is possible to replace Property 1 by a condition that only requires $\Phi$ to be ``close'' to a function that satisfies this property.  Specifically, we will consider RBFs $\Phi$ that satisfy the following.

\begin{prty1_1}
 Let $T:C_c^k(\mathbb{R}^d)\rightarrow C_c(\mathbb{R}^d)$ and $G\in L_\text{loc}^1(\mathbb{R}^d)$ be a pair satisfying Property 1, and suppose $G= \Phi*\mu_n +\nu_n$ where $\Phi\in L_\text{loc}^1(\mathbb{R}^d)$, $\mu_n$ is a sequence of compactly supported, finite Borel measures with $\norm{\mu_n}$ bounded by a constant, and $\nu_n$ is a sequence of finite Borel measures with $\norm{\nu_n}$ converging to 0.
\end{prty1_1}

The following provides an error bound for approximation by $S_X(\Phi)$ where $X$ is quasi-uniform, and the error is measured as
\begin{equation*} 
 \mathcal{E}(f, S_X(\Phi))_p = \inf_{S \in S_X(\Phi)} \norm{f-S}_{L^p(\mathbb{R}^d)}
\end{equation*}
for $1 \leq p \leq \infty$.

\begin{theorem}\label{gre_per}
Let $X$ be a quasi-uniform set in $\mathbb{R}^d$.  Suppose $\Phi$ is an RBF satisfying Property 1' and Property 2, and let $f \in C_c^k(\mathbb{R}^d)$, then 
\begin{equation*} 
 \mathcal{E}(f, S_X(\Phi))_p \leq C h_X^\kappa \abs{f}_{W(L^p(\mathbb{R}^d),T)}
\end{equation*}
for any $1\leq p \leq \infty$.
\end{theorem}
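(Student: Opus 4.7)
The plan is to mimic the DeVore--Ron argument while absorbing the perturbation allowed by Property 1'. For any $f\in C_c^k(\mathbb{R}^d)$, applying Property 1 to the pair $(T,G)$ gives the representation $f=(Tf)*G$. Substituting $G=\Phi*\mu_n+\nu_n$ and using associativity of convolution (which is legitimate since $Tf$ and $\mu_n$ have compact support and $\Phi\in L^1_{\text{loc}}$) yields
\begin{equation*}
 f = \bigl((Tf)*\mu_n\bigr)*\Phi + Tf*\nu_n.
\end{equation*}
Setting $g_n := Tf*\mu_n$, the first term takes exactly the form in Property 1, so the DeVore--Ron machinery applies with $g_n$ playing the role of $Tf$.

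Next I replace each continuous shift $\Phi(\cdot-t)$ by the local quasi-interpolant $K(\cdot,t)$ supplied by Property 2 and define the candidate approximant
\begin{equation*}
 S_n(x) := \int_{\mathbb{R}^d} g_n(t)\,K(x,t)\,dt.
\end{equation*}
Since $g_n$ is continuous with compact support (as $Tf\in C_c$ and $\mu_n$ is compactly supported) and since $X(t)\subset B(t,Ch_X)\cap X$ together with quasi-uniformity of $X$ forces only finitely many $\xi\in X$ to appear as $t$ ranges over $\mathrm{supp}(g_n)$, a Fubini argument rewrites $S_n$ as a finite linear combination $\sum_\xi c_\xi^{(n)}\Phi(\cdot-\xi)$, which then lies in $S_X(\Phi)$.

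The error splits as $f-S_n = \int g_n(t)\,E(\cdot,t)\,dt + Tf*\nu_n$ with $E(x,t):=\Phi(x-t)-K(x,t)$. Property 2 supplies the radial envelope $\abs{E(x,t)}\le F(x-t)$, where $F(y):=Ch_X^{\kappa-d}(1+\norm{y}_2/h_X)^{-l}$; the change of variables $y=h_X z$ combined with $l>d$ gives $\norm{F}_1\le C'h_X^\kappa$. Thus Minkowski's integral inequality (equivalently, Young's inequality applied to $\abs{g_n}*F$) together with the generalized Young inequality (\ref{eq:young}) produces
\begin{equation*}
 \norm{f-S_n}_p \le C'h_X^\kappa\norm{g_n}_p + \norm{Tf}_p\norm{\nu_n} \le C'h_X^\kappa\norm{Tf}_p\norm{\mu_n} + \norm{Tf}_p\norm{\nu_n}.
\end{equation*}
Since $\norm{\mu_n}$ is uniformly bounded while $\norm{\nu_n}\to 0$, taking the infimum over $n$ yields $\mathcal{E}(f,S_X(\Phi))_p\le Ch_X^\kappa\norm{Tf}_p = Ch_X^\kappa\abs{f}_{W(L^p(\mathbb{R}^d),T)}$. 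The step I expect to require the most care is the verification that $S_n\in S_X(\Phi)$: one must justify the swap of integral and sum rigorously under only $L^1_{\text{loc}}$ regularity of $\Phi$, confirm finiteness of the index set from compact support of $g_n$ and quasi-uniformity of $X$, and check membership in $L^1$ for the resulting finite combination. Once that bookkeeping is settled, the perturbation layer is essentially formal because the two error contributions are controlled separately by $\norm{\mu_n}$ and $\norm{\nu_n}$.
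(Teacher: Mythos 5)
Your proposal is correct and follows essentially the same route as the paper: the approximant $S_n=\int (Tf*\mu_n)(t)K(\cdot,t)\,dt$ is exactly the paper's $F_n$, the error is split into the kernel-replacement term controlled by Property 2 plus the $Tf*\nu_n$ remainder, and both are bounded via the generalized Young inequality with $\norm{\mu_n}$ bounded and $\norm{\nu_n}\to 0$. The only difference is that you spell out a few details the paper leaves implicit (the $L^1$ norm of the envelope and the finiteness of the sum), which is fine.
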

\begin{proof}
 First, we define a sequence of approximants to $f$ by 
\begin{eqnarray*}
 F_n &:=& \int_{\mathbb{R}^d} Tf*\mu_n(t)K(\cdot,t)dt \\
&=& \sum_{\xi \in X}\Phi(\cdot-\xi) \int_{\mathbb{R}^d} Tf*\mu_n(t)A(t,\xi)dt.
\end{eqnarray*}
Note that this sum is finite due to the compact support of $Tf*\mu_n$ and the conditions imposed on $A(\cdot,\cdot)$. Now 
\begin{eqnarray*}
 \abs{f-F_n} &=& \abs{\int_{\mathbb{R}^d} Tf(t)G(\cdot-t)dt - \int_{\mathbb{R}^d} Tf*\mu_n(t)K(\cdot,t)dt} \\
&=& \abs{\int_{\mathbb{R}^d} Tf*\mu_n(t)\left(\Phi(\cdot-t) - K(\cdot,t)\right)dt + Tf*\nu_n}\\
&\leq& \int_{\mathbb{R}^d} \abs{Tf*\mu_n(t)} \abs{\Phi(\cdot-t) - K(\cdot,t)}dt + \abs{Tf*\nu_n},
\end{eqnarray*}
and by Property 2,
\begin{equation*}
 \abs{f-F_n} \leq Ch_X^{\kappa-d} \int_{\mathbb{R}^d}\abs{Tf*\mu_n(t)} \left(1+\frac{\norm{\cdot-t}_2}{h_X}\right)^{-l}dt + \abs{Tf*\nu_n}.
\end{equation*}
Therefore
\begin{eqnarray*}
 \norm{f-F_n}_p &\leq& Ch_X^{\kappa-d} \norm{\abs{Tf*\mu_n}* \left(1+\frac{\norm{\cdot}_2}{h_X}\right)^{-l}}_p +\norm{Tf*\nu_n}_p \\
&\leq& Ch_X^{\kappa} \norm{Tf}_p \norm{\mu_n} + \norm{Tf}_p \norm{\nu_n}
\end{eqnarray*}
 by the generalization of Young's inequality stated in equation \eqref{eq:young}. Hence, for $N$ sufficiently large (depending on $X$), we will have 
\begin{equation*} 
 \norm{f-F_N}_p \leq Ch_X^\kappa \norm{Tf}_p.
\end{equation*}
\qed
\end{proof}

Now that we have established this result for $C_c^k$, we would like to extend it to all of $W(L^p(\mathbb{R}^d),T)$.  This is accomplished in the following corollary using a density argument.

\begin{corollary}\label{gre_per_cor}
 Suppose $f \in W(L^p(\mathbb{R}^d),T)$, then 
\begin{equation*} 
 \mathcal{E}(f, S_X(\Phi))_p \leq C h_X^\kappa \abs{f}_{W(L^p(\mathbb{R}^d),T)}
\end{equation*}
\end{corollary}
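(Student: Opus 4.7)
The plan is a standard density argument leveraging the definition of $W(L^p(\mathbb{R}^d),T)$ as the completion of $C_c^\infty(\mathbb{R}^d)$ under the norm $\norm{\cdot}_p + \norm{T\cdot}_p$, together with the bound already established in Theorem \ref{gre_per}.

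First, given $f \in W(L^p(\mathbb{R}^d),T)$, I would select a sequence $\{f_n\} \subset C_c^\infty(\mathbb{R}^d)$ such that $f_n \to f$ in $L^p$ and $Tf_n \to Tf$ in $L^p$; such a sequence exists directly from the definition of $W(L^p(\mathbb{R}^d),T)$, and it simultaneously gives $\abs{f_n}_{W(L^p,T)} \to \abs{f}_{W(L^p,T)}$. Since $C_c^\infty(\mathbb{R}^d) \subset C_c^k(\mathbb{R}^d)$, Theorem \ref{gre_per} applies to each $f_n$, so for every $n$ there is an element $S_n \in S_X(\Phi)$ with
\begin{equation*}
\norm{f_n - S_n}_p \leq C h_X^\kappa \abs{f_n}_{W(L^p,T)} + \frac{1}{n},
\end{equation*}
where $C$ is the fixed constant from Theorem \ref{gre_per} (in particular, independent of $n$).

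Next, I would apply the triangle inequality to compare $f$ directly with the $S_n$:
\begin{equation*}
\norm{f - S_n}_p \leq \norm{f - f_n}_p + \norm{f_n - S_n}_p \leq \norm{f - f_n}_p + C h_X^\kappa \abs{f_n}_{W(L^p,T)} + \frac{1}{n}.
\end{equation*}
Passing to the limit $n \to \infty$, the first and last terms vanish and the middle term tends to $C h_X^\kappa \abs{f}_{W(L^p,T)}$. Since $S_n \in S_X(\Phi)$ for each $n$, the definition of $\mathcal{E}(f, S_X(\Phi))_p$ as an infimum yields
\begin{equation*}
\mathcal{E}(f, S_X(\Phi))_p \leq \limsup_{n\to\infty} \norm{f - S_n}_p \leq C h_X^\kappa \abs{f}_{W(L^p,T)},
\end{equation*}
which is the desired conclusion.

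There is no real obstacle here beyond bookkeeping; the only mildly delicate point is confirming that $T$ and the semi-norm $\abs{\cdot}_{W(L^p,T)}$ extend consistently to $W(L^p(\mathbb{R}^d),T)$ so that $\abs{f_n}_{W(L^p,T)} \to \abs{f}_{W(L^p,T)}$ along an approximating sequence. This is built into the completion-based definition of $W(L^p(\mathbb{R}^d),T)$ given earlier in the paper, so the argument goes through cleanly.
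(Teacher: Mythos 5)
Your argument is correct and is essentially identical to the paper's own proof: both take a sequence $f_n \in C_c^\infty$ converging to $f$ in $W(L^p(\mathbb{R}^d),T)$, apply Theorem \ref{gre_per} to each $f_n$ to obtain approximants in $S_X(\Phi)$, and conclude via the triangle inequality and passage to the limit. Your extra $\tfrac{1}{n}$ slack to extract actual elements of $S_X(\Phi)$ from the infimum bound is a harmless refinement of the same argument.
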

\begin{proof}
 Let $f_n \in C_c^\infty$ be a sequence converging to $f$ in $W(L^p(\mathbb{R}^d),T)$, and let $F_n \in S_X(\Phi)$ be a sequence of approximants satisfying
\begin{equation*}
 \norm{f_n-F_n}_p \leq C h_X^\kappa \abs{f_n}_{W(L^p(\mathbb{R}^d),T)}.
\end{equation*}
Then
\begin{eqnarray*} 
 \norm{f-F_n}_p &\leq& \norm{f-f_n}_p + \norm{f_n-F_n}_p \\
&\leq& \norm{f-f_n}_p + C h_X^\kappa \abs{f_n}_{W(L^p(\mathbb{R}^d),T)}.
\end{eqnarray*}
Now since $f_n$ converges to $f$ in $W(L^p(\mathbb{R}^d),T)$, the result follows.
\qed
\end{proof}

\section{Special Cases}

In this section we will provide some examples, showing how to verify the properties listed above. However, we will first need to take a look at local polynomial reproductions.  As in \cite{r:dev:1}, this will be a crucial part of verifying Property 2.  To that end let $\Phi \in C^n(\mathbb{R}^d)$ be the RBF under consideration, and let $P$ denote the space of polynomials of degree at most $n-1$ on $\mathbb{R}^d$.  For a finite set $Y\subset\mathbb{R}^d$, let $\Lambda_Y$ be the set of extensions to $C(\mathbb{R}^d)$ of linear combinations of the point evaluation functionals $\delta_y:P|_Y\rightarrow \mathbb{R}$.  We now assume the following, cf. \cite[p.9]{r:dev:1}:
\begin{itemize}
 \item[(i)] There is a constant $C_1$ such that  $X(t) \subset B(t,C_1h_X)\cap X$ for all $t\in\mathbb{R}^d$.
 \item[(ii)] For all $t$, there exists $\lambda_t\in\Lambda_{X(t)}$ such that $\lambda_t$ agrees with $\delta_t$ on $P$.
 \item[(iii)] $\norm{\lambda_t}\leq C_2$ for some constant $C_2$ independent of $t$.
\end{itemize}
Based on these assumptions, $\lambda_t$ takes the form $\sum_{\xi \in X(t)} A(\xi,t)\delta_\xi$, and we can define a kernel approximant to $\Phi$ by
\begin{eqnarray*}
 K(x,t) &=& \lambda_t(\Phi(x-\cdot)) \\
   &=& \sum_{\xi \in X(t)} A(t,\xi) \Phi(x-\xi).
\end{eqnarray*}

One way to verify the validity of our assumptions is the following.
For each $x\in h_X\mathbb{Z}^d$, we denote by $Q_x$ the cube of side length $h_X$ centered at $x$.  This provides a partition of $\mathbb{R}^d$ into cubes.  We then associate to each cube $Q_x$ the ball $B(x,C_3h_X)$ for some constant  $C_3$, and for each $t$ in a fixed $Q_{x_0}$,  we define $X(t)= X\cap B(x_0,C_3h_X)$.  By choosing $C_3$ appropriately and bounding the possible values of $h_X$, one can show that there exists $\lambda_t$ satisfying the above properties, with $C_2=2$, for all $t \in Q_{x_0}$, cf. \cite[Chapter 3]{r:wen:2}.  As $x_0$ was arbitrary, the result holds for all $t\in\mathbb{R}^d$.  Note that by using the same set $X(t)$ for all $t$ in a given cube $Q_{x_0}$, we are able to choose the coefficients $A(\xi,t)$ so that they are continuous with respect to $t$ in the interior of the cube.  To see this, let $\{p_i\}_{i=1}^m$ be a basis for $P$ and let $B(x_0,C_3h_X)\cap X = \{\xi_j\}_{j=1}^N$.  We now define the matrix $M_{i,j}:=p_i(\xi_j)$ and the $t$ dependent vector $\beta(t)_i:=p_i(t)$.  Since $\lambda_t$ reproduces polynomials, there exists a vector $\alpha(t)$ such that $M\alpha(t)=\beta(t)$ for all $t\in Q_{x_0}$.  We could therefore choose a specific $\alpha(t)$ by means of a pseudo-inverse, i.e. $\alpha(t):=M^T(MM^T)^{-1}\beta(t)$.  In this form it is clear that $\alpha$ is a continuous function of $t$ and $\lambda_t:=\sum_{i=1}^N \alpha_i(t)\delta_{\xi_i}$ satisfies properties (i) and (ii).  Property (iii) follows from the fact that the pseudo-inverse gives the minimum norm solution.

\subsection{Sobolev Splines}

We are now in a position to prove Property 2 for the Sobolev Splines $G_\gamma$ where $\gamma$ is a positive even integer. In particular, we will show that there is a kernel $K_\gamma$ so that
\begin{equation*}
 \abs{G_\gamma(x-t)-K_\gamma(x,t)} \leq C h_X^{\gamma-d}\left(1+\frac{\norm{x-t}_2}{h_X}\right)^{-d-1}.
\end{equation*}
when $d$ is odd and obtain a similar result for $d$ even.  From the argument above, we know that there exists $\lambda_t$ that reproduces polynomials of degree $\gamma$ and satisfies the required form defined in the previous section.  Using $\lambda_t$, we define the kernel approximant to $G_\gamma$ by 
\begin{equation*}
 K_\gamma(x,t):=\lambda_t(G_\gamma(x-\cdot)).
\end{equation*}

Property 2 will only be verified for $t=0$ as all other cases work similarly.  First suppose that $\norm{x}_2\geq 2C_1h_X$, then $G_\gamma$ is analytic in $B(x, C_1h_X)$.  If we let $R$ be the degree $\gamma$ Taylor polynomial of $\Phi$ at $x$, then we get
\begin{equation*}
 \abs{G_\gamma(x)-K_\gamma(x,0)} =\abs{G_\gamma(x)-R(x) - \lambda_0(G_\gamma(x-\cdot)-R(x-\cdot))}, 
\end{equation*}
 and therefore 
\begin{equation*}
 \abs{G_\gamma(x)-K_\gamma(x,0)} \leq (1+C_2)\norm{G_\gamma-R}_{L^\infty(B(x,C_1h_X))}. 
\end{equation*}
From Taylor's remainder theorem, we get
\begin{equation*}
 \abs{G_\gamma(x)-K_\gamma(x,0)} \leq \frac{C}{(\gamma+1)!}h_X^{\gamma+1}\norm{G_\gamma}_{W^{\gamma+1,\infty}(B(x,C_1h_X))},
\end{equation*}
and applying proposition \ref{pr:ss2} gives
\begin{eqnarray}
 \abs{G_\gamma(x)-K_\gamma(x,0)} &\leq& C h_X^{\gamma+1} \abs{x}_2^{-d-1} \\
          &\leq& C h_X^{\gamma-d} \left(1+\frac{\norm{x}_2}{h_X}\right)^{-d-1}.\label{eq:large}
\end{eqnarray}

Now if $\norm{x}_2<2C_1h_X$ and $d$ is odd, we define $R$ to be the degree $\gamma-d-1$ Taylor polynomial of $G_\gamma$ at 0; if $d$ is even, let $R$ to be the degree $\gamma-d-2$ Taylor polynomial of $G_\gamma$ at 0.  For $d$ odd, we get
\begin{equation*}
 \abs{G_\gamma(x)-K_\gamma(x,0)} \leq C h_X^{\gamma-d}\norm{G_\gamma}_{W^{\gamma-d,\infty}(B(0,3C_1h_X))},
\end{equation*}
and by Proposition \ref{pr:ss2}, we have
\begin{equation*}
 \abs{G_\gamma(x)-K_\gamma(x,0)} \leq C h_X^{\gamma-d}.
\end{equation*}
By making use of the assumption $\norm{x}_2<2C_1h_X$, we finally get
 \begin{equation}\label{eq:small:odd}
 \abs{G_\gamma(x)-K_\gamma(x,0)} \leq C h_X^{\gamma-d}\left(1+\frac{\norm{x}_2}{h_X}\right)^{-d-1}.
\end{equation}
Similarly for $d$ even, we have
\begin{equation}\label{eq:small:even}
\abs{G_\gamma(x)-K_\gamma(x,0)} \leq C h_X^{\gamma-d-1}\left(1+\frac{\norm{x}_2}{h_X}\right)^{-d-1}.
\end{equation}

Putting these estimates together, equations (\ref{eq:large}), (\ref{eq:small:odd}), and (\ref{eq:small:even}) imply Property 2 for the Sobolev splines.  Note that Property 1' is clearly satisfied for $G=G_\gamma$ and $T=(1-\Delta)^{\gamma/2}$. 

\begin{theorem}
For any positive even integer $\gamma \geq d+1$ and any $1 \leq p < \infty$, the following rates are valid for approximation by the Sobolev splines $G_\gamma$ 
\begin{itemize}
 \item [i)] If $d$ is odd
\begin{equation*} 
  \mathcal{E}(f,S_X(G_\gamma))_p \leq C h_X^{\gamma} \abs{f}_{L^{\gamma,p}(\mathbb{R}^d)}
\end{equation*}
 \item [ii)] If $d$ is even
\begin{equation*} 
  \mathcal{E}(f,S_X(G_\gamma))_p \leq C h_X^{\gamma-1} \abs{f}_{L^{\gamma,p}(\mathbb{R}^d)}.
\end{equation*}
\end{itemize}
\end{theorem}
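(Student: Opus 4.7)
The plan is to reduce the theorem to a direct invocation of Corollary \ref{gre_per_cor} with $\Phi = G_\gamma$ and $T = (1-\Delta)^{\gamma/2}$ (a genuine differential operator, since $\gamma$ is a positive even integer). Three things need to be assembled: (i) Property 1', (ii) Property 2 with the correct exponent $\kappa$, and (iii) an identification of the smoothness space $W(L^p(\mathbb{R}^d), (1-\Delta)^{\gamma/2})$ with $L^{\gamma,p}(\mathbb{R}^d)$.

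For (i), I would observe that Property 1' is trivially satisfied: since $G_\gamma$ is itself the Green's function of $(1-\Delta)^{\gamma/2}$, the pair $(T, G_\gamma)$ already satisfies Property 1, so I may take the constant sequences $\mu_n = \delta_0$ and $\nu_n = 0$ (with norms $1$ and $0$ respectively), and the decomposition $G_\gamma = G_\gamma * \delta_0 + 0$ is the required one. For (ii), Property 2 has just been established in the paragraphs preceding the theorem: combining (\ref{eq:large}), (\ref{eq:small:odd}), and (\ref{eq:small:even}) yields
\[
\abs{G_\gamma(x-t) - K_\gamma(x,t)} \leq C h_X^{\kappa - d}\left(1+\frac{\norm{x-t}_2}{h_X}\right)^{-d-1}
\]
with $\kappa = \gamma$ when $d$ is odd and $\kappa = \gamma - 1$ when $d$ is even, and decay exponent $l = d+1 > d$. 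For (iii), the identification $W(L^p(\mathbb{R}^d), (1-\Delta)^n) = L^{2n,p}(\mathbb{R}^d)$ noted in the introduction applies with $n = \gamma/2$, a positive integer; under this identification the semi-norm $\abs{f}_{W(L^p, T)} = \norm{(1-\Delta)^{\gamma/2} f}_p$ coincides with $\abs{f}_{L^{\gamma,p}}$.

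With these three ingredients in place, plugging them into Corollary \ref{gre_per_cor} immediately yields the two advertised bounds. The hypothesis $\gamma \geq d+1$ is exactly what is needed for the Taylor polynomials used in the near-field estimate of Property 2 to be of non-negative degree: for $d$ odd one needs $\gamma - d - 1 \geq 0$, while for $d$ even (where $\gamma \geq d+1$ together with $\gamma$ even forces $\gamma \geq d+2$) one needs $\gamma - d - 2 \geq 0$. Since the substantive analysis of the error kernel has already been carried out in the preceding discussion, no real obstacle remains; the theorem is essentially a bookkeeping step that feeds the established Property 2 into the general framework of the corollary.
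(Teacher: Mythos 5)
Your proposal is correct and follows essentially the same route as the paper: the paper's own ``proof'' of this theorem is precisely the preceding discussion, which establishes Property 2 via the two Taylor-polynomial estimates (far field and near field, with $\kappa=\gamma$ for $d$ odd and $\kappa=\gamma-1$ for $d$ even), remarks that Property 1' holds trivially for $G=G_\gamma$ and $T=(1-\Delta)^{\gamma/2}$, and then feeds these into the general framework of Corollary \ref{gre_per_cor} together with the identification $W(L^p(\mathbb{R}^d),(1-\Delta)^{\gamma/2})=L^{\gamma,p}(\mathbb{R}^d)$. Your accounting of why $\gamma\geq d+1$ (and hence $\gamma\geq d+2$ when $d$ is even) suffices for the near-field Taylor polynomials matches the paper's construction.
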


\subsection{Wendland Functions}

We now come to the reason for studying perturbations:  to derive approximation results for the Wendland functions.  We will begin by using the polynomial reproducing functionals $\lambda_t$ to verify Property 2.  Note that $\Phi_{d,k}(x-t)$ and $\lambda_t(\Phi_{d,k}(x-\cdot))$ are both zero for $\norm{x-t}_2>1+C_1h_X$.  Therefore in order to verify Property 2, it suffices to show that for $\norm{x-t}_2<1+C_1h_X$ we have $\abs{\Phi_{d,k}(x-t)-\lambda_0(\Phi_{d,k}(x-\cdot))} \leq Ch_X^{2k}$.  Similar to the Sobolev splines, we will only verify this inequality for $t=0$.  To begin, fix $x$, and let $R$ be the $2k-1$ degree Taylor polynomial of $\Phi_{d,k}$ at $x$. Then

\begin{eqnarray*}
\abs{\Phi_{d,k}(x)-\lambda_0(\Phi_{d,k}(x-\cdot))}&=& \abs{(\Phi_{d,k}-R)(x)-\lambda_0((\Phi_{d,k}-R)(x-\cdot))}\\
&\leq& \norm{\lambda_0}\norm{\Phi_{d,k}-R}_{L^\infty(B(x,C_1h_X))} \\
&\leq& C_{\Phi_{d,k}}h_X^{2k} \norm{\lambda_0} \norm{\Phi_{d,k}}_{W^{2k,\infty}(B(x,C_1h_X))}
\end{eqnarray*}
We have therefore shown that each $\Phi_{d,k}$ satisfies Property 2 with $\kappa=2k$.

The challenge in proving rates for the Wendland functions appears when trying to prove Property 1'.  To accomplish this, we can work in the Fourier domain, where the convolution becomes a standard product.  For example, to show that $G=\Phi*\mu$ for some $\mu \in M(\mathbb{R}^d)$, we can verify that $\hat{G}/\hat{\Phi}$ is the Fourier transform of some $\mu \in M(\mathbb{R}^d)$.  The difficulty lies in characterizing the space of Fourier transforms of $M(\mathbb{R}^d)$.  This is known to be a very difficult problem, cf. \cite{r:ben}.  However, in certain situations, we are able to make this determination. Our approach will be to work under the assumption that the functions being approximated have added smoothness, and we will additionally work in odd space dimension $d$, since we know the exact form of the Fourier transform of $\Phi_{d,k}$.  In this situation, we can choose our Green's function $G$ to be the Sobolev spline $G_{2d+2k+2}$. 
\begin{lemma}
 Let $G$ and $\Phi_{d,k}$ be defined as above with $k\in\mathbb{Z}^+$. In this case we have
\begin{equation*}
 \mu:=(\hat{G}/\hat{\Phi})^\vee \in L^1(\mathbb{R}^d)
\end{equation*}
\end{lemma}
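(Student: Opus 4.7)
The plan is to show $\mu\in L^1$ by proving that $\hat\mu := \hat G/\hat\Phi$ is smooth on $\mathbb{R}^d$ and that $D^\alpha \hat\mu\in L^1(\mathbb{R}^d)$ for every multi-index $\alpha$ with $|\alpha|\leq d+1$. By the Fourier identity $(D^\alpha\hat\mu)^\vee=(-ix)^\alpha\mu$, this translates into $|x^\alpha\mu(x)|\in L^\infty$ for each such $\alpha$; combined with $\mu\in L^\infty$ (from $\hat\mu\in L^1$) this yields $|\mu(x)|\leq C(1+\norm{x}_2)^{-(d+1)}$, and hence $\mu\in L^1(\mathbb{R}^d)$. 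Smoothness of $\hat\mu$ is immediate since $\hat G$ is Schwartz and $\hat\Phi_{d,k}$ is positive and analytic, the former by strict positive definiteness of the Wendland function and the latter by Proposition \ref{pr:wend_ft_1}.

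The main quantitative claim is $|D^\alpha\hat\mu(\omega)|\leq C_\alpha(1+\norm{\omega}_2)^{-(d+1)}$. For the numerator, $|D^\beta\hat G(\omega)|\leq C(1+\norm{\omega}_2)^{-(2d+2k+2)-|\beta|}$ follows from $\hat G(\omega)=(1+\norm{\omega}_2^2)^{-(d+k+1)}$. For the denominator, substituting $\alpha_m=1$ and $\beta_m=(-1)^m/2^{m+1}$ (obtained in the proof of Proposition \ref{pr:wend_ft}) into the formula $\hat\Phi_{d,k}=B_m f_m(r)r^{-3m-2}$ gives the leading asymptotic
\begin{equation*}
\hat\Phi_{d,k}(\omega) = \frac{B_m}{m!}\norm{\omega}_2^{-(d+2k+1)}\left(1+\frac{(-1)^m}{2^m}\cos\norm{\omega}_2\right)+O\bigl(\norm{\omega}_2^{-(d+2k+2)}\bigr).
\end{equation*}
Since $m=n+k\geq 1$, the parenthesized factor lies in $[1-2^{-m},1+2^{-m}]\subset[1/2,3/2]$, so at infinity $\hat\Phi_{d,k}$ is squeezed between two positive multiples of $\norm{\omega}_2^{-(d+2k+1)}$. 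Combined with strict positivity of $\hat\Phi_{d,k}$ and continuity, this delivers the two-sided global bound
\begin{equation*}
c(1+\norm{\omega}_2)^{-(d+2k+1)}\leq \hat\Phi_{d,k}(\omega)\leq C(1+\norm{\omega}_2)^{-(d+2k+1)}.
\end{equation*}
The same expansion shows $|D^\beta\hat\Phi_{d,k}(\omega)|\leq C_\beta(1+\norm{\omega}_2)^{-(d+2k+1)}$ for every $\beta$: the $\cos r$ and $\sin r$ terms in $f_m$ preserve the leading power under differentiation, so derivatives gain no extra decay. Expanding $D^\alpha\hat\mu$ by the Leibniz/quotient rules yields a sum of terms of the form $D^{\beta_0}\hat G\cdot\prod_i D^{\beta_i}\hat\Phi_{d,k}\big/\hat\Phi_{d,k}^{r+1}$; the slowest-decaying is $\hat G\cdot\prod_{i=1}^{|\alpha|}D\hat\Phi_{d,k}/\hat\Phi_{d,k}^{|\alpha|+1}$, of size $\norm{\omega}_2^{-(2d+2k+2)-|\alpha|(d+2k+1)+(|\alpha|+1)(d+2k+1)}=\norm{\omega}_2^{-(d+1)}$, which gives the claimed estimate.

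The hardest ingredient, I expect, is the uniform lower bound $\hat\Phi_{d,k}(\omega)\gtrsim (1+\norm{\omega}_2)^{-(d+2k+1)}$. Proposition \ref{pr:wend_ft_1} delivers only the upper bound; obtaining the matching lower bound requires the explicit coefficients $\alpha_m=1$, $\beta_m=(-1)^m/2^{m+1}$ together with the strict inequality $2|\beta_m|/\alpha_m=2^{-m}<1$, which is precisely what prevents the oscillatory term $(-1)^m\cos\norm{\omega}_2/2^m$ from cancelling the constant contribution $1$. Here the assumption $k\in\mathbb{Z}^+$ (so $m\geq 1$) is essential. Once this bound is in hand, the remaining derivative estimates for $\hat\Phi_{d,k}$ and the conversion from $L^1$-bounds on $D^\alpha\hat\mu$ to polynomial decay of $\mu$ outlined in the first paragraph are a matter of standard bookkeeping.
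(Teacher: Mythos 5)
Your proposal is correct, and it reaches the conclusion by a genuinely different route than the paper. The paper exploits radial symmetry: it writes $\mu$ via the Hankel-type inversion formula $\mu(x) = r^{-(d-2)/2}\int_0^\infty \frac{(1+t^2)^{-(d+1)/2}}{(1+t^2)^{(d+2k+1)/2}\hat{\Phi}(t)}\,t^{d/2}J_{(d-2)/2}(rt)\,dt$ and integrates by parts $(d+5)/2$ times against the Bessel kernel using $\frac{d}{dz}\{z^\nu J_\nu(z)\}=z^\nu J_{\nu-1}(z)$, invoking Proposition \ref{pr:wend_ft_1} to control the integrand at each step; this yields enough decay of $\mu$ at infinity for integrability. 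You instead work with Cartesian derivatives of $\hat{\mu}=\hat{G}/\hat{\Phi}$, use the quotient/Leibniz rule to get $\abs{D^\alpha\hat{\mu}(\omega)}\leq C_\alpha(1+\norm{\omega}_2)^{-(d+1)}$ for $\abs{\alpha}\leq d+1$, and convert this into $\abs{\mu(x)}\leq C(1+\norm{x}_2)^{-(d+1)}$ via $(D^\alpha\hat{\mu})^\vee=(-ix)^\alpha\mu$. The two arguments are the radial and Cartesian incarnations of the same principle (regularity of $\hat{\mu}$ buys decay of $\mu$), and they rest on the same analytic inputs: the non-vanishing of $\hat{\Phi}_{d,k}$, the sharp two-sided asymptotics coming from $\alpha_m=1$ and $2\abs{\beta_m}=2^{-m}<1$, and the fact that derivatives of $\hat{\Phi}_{d,k}$ decay no faster than the function itself. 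Your version has the merit of making explicit the uniform lower bound $\hat{\Phi}_{d,k}(\omega)\gtrsim(1+\norm{\omega}_2)^{-(d+2k+1)}$ (which is Wendland's Theorem 10.35, cited later in the paper but only implicit in this proof) and of producing a quantitative pointwise decay rate for $\mu$; the paper's version avoids the multivariate quotient-rule bookkeeping by reducing everything to a one-dimensional oscillatory integral. One small slip: $\hat{G}=(1+\norm{\cdot}_2^2)^{-(d+k+1)}$ is \emph{not} Schwartz (it decays only polynomially); what you actually use, namely smoothness together with $\abs{D^\beta\hat{G}(\omega)}\leq C(1+\norm{\omega}_2)^{-(2d+2k+2)-\abs{\beta}}$, is correct, so nothing in the argument breaks.
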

\begin{proof}
 First, it is clear that $\hat{G}/\hat{\Phi}$ is a radial $L^1(\mathbb{R}^d)$ function, so $\mu \in L^\infty(\mathbb{R}^d)$.  Additionally, we can write
\begin{equation*}
 \mu(x) = r^{-(d-2)/2}\int_0^\infty \frac{(1+t^2)^{-(d+1)/2}}{(1+t^2)^{(d+2k+1)/2}\hat{\Phi}(t)}t^{d/2}J_{(d-2)/2}(rt)dt
\end{equation*}
where $J$ is a Bessel function of the first kind.  In this form, we can bound $\mu$ for $r$ large, and hence show that it has sufficient decay to be in $L^1(\mathbb{R}^d)$.  Verifying the decay of $\mu$ is accomplished using the fact that 
\begin{equation*}
 \frac{d}{dz}\{z^\nu J_\nu(z)\}=z^\nu J_{\nu-1}(z)
\end{equation*}
and integrating by parts $(d+5)/2$ times.  Note that Proposition \ref{pr:wend_ft_1} is used to bound the decay at each step.
\qed
\end{proof}

With $\mu$ defined as in the previous lemma, we get
\begin{equation*}
 G_{2d+2k+2}=\Phi_{d,k}*\mu,
\end{equation*}
and by choosing $\mu_n$ to be the restriction of $\mu$ to $B(0,n)$, we can see that Property 1' is satisfied. Therefore we have the following theorem.

\begin{theorem}\label{thm:wend:rd}
 For any positive integer $k$ and any odd space dimension $d$, we have
\begin{equation*}
 \mathcal{E}(f,S_X(\Phi_{d,k}))_p \leq C h_X^{2k} \abs{f}_{L^{2d+2k+2,p}}
\end{equation*}
for $1 \leq p < \infty$.
\end{theorem}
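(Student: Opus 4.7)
The plan is to assemble the theorem by feeding the already-verified ingredients into Corollary~\ref{gre_per_cor}. Property~2 for $\Phi_{d,k}$ with $\kappa=2k$ is established in the paragraphs immediately preceding the theorem via the polynomial reproducing functionals $\lambda_t$ and a $(2k-1)$-degree Taylor argument on $B(x,C_1 h_X)$. What remains is to package Property~1' correctly and match the resulting smoothness index to $L^{2d+2k+2,p}$.

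First I would fix the Green's function pair. Take $G = G_{2d+2k+2}$ and $T = (1-\Delta)^{d+k+1}$. Because $\hat{G}_{2d+2k+2}(\omega) = (1+\|\omega\|_2^2)^{-(d+k+1)}$, we have $(1-\Delta)^{d+k+1} G = \delta_0$ as distributions, so for every $f \in C_c^{2d+2k+2}(\mathbb{R}^d)$ the representation $f(x) = \int Tf(t)\, G(x-t)\,dt$ holds, i.e.\ the pair $(T,G)$ satisfies Property~1. Moreover, since $(1-\Delta)^{d+k+1}$ has integer order, the completion identification $W(L^p(\mathbb{R}^d),(1-\Delta)^{d+k+1}) = L^{2d+2k+2,p}(\mathbb{R}^d)$ noted in the introduction gives exactly the seminorm appearing in the conclusion of the theorem.

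Next I would use the preceding lemma, which provides $\mu = (\hat{G}/\hat{\Phi}_{d,k})^\vee \in L^1(\mathbb{R}^d)$ with $G_{2d+2k+2} = \Phi_{d,k} * \mu$. To realize Property~1' I truncate: set $\mu_n := \mu \cdot \chi_{B(0,n)}$, regarded as a compactly supported finite Borel measure via its $L^1$ density. Then $\|\mu_n\| \le \|\mu\|_{L^1}$ is uniformly bounded, and defining
\begin{equation*}
\nu_n := G_{2d+2k+2} - \Phi_{d,k} * \mu_n = \Phi_{d,k} * (\mu - \mu_n),
\end{equation*}
Young's inequality (\ref{eq:young}) yields $\|\nu_n\|_1 \le \|\Phi_{d,k}\|_1 \|\mu - \mu_n\|_1 \to 0$ by dominated convergence. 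Hence $\nu_n$ is a finite Borel measure with vanishing norm, and Property~1' is verified.

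Finally, with Properties~1' and~2 in place, Corollary~\ref{gre_per_cor} applies with $\kappa = 2k$ and gives
\begin{equation*}
\mathcal{E}(f, S_X(\Phi_{d,k}))_p \le C h_X^{2k}\, |f|_{W(L^p(\mathbb{R}^d),(1-\Delta)^{d+k+1})} = C h_X^{2k}\, |f|_{L^{2d+2k+2,p}},
\end{equation*}
which is the claim. The main conceptual obstacle has already been absorbed into the lemma above, namely showing that $\hat{G}/\hat{\Phi}_{d,k}$ is the Fourier transform of an $L^1$ function via Proposition~\ref{pr:wend_ft_1} and repeated integration by parts against the Bessel kernel; once this is granted, the remaining work is bookkeeping to match indices and to truncate $\mu$ so that Property~1' is met in its precise form.
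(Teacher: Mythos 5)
Your proposal is correct and follows essentially the same route as the paper: Property 2 with $\kappa=2k$ from the Taylor/polynomial-reproduction argument, Property 1' from the lemma giving $\mu=(\hat{G}_{2d+2k+2}/\hat{\Phi}_{d,k})^\vee\in L^1$ together with truncation to $B(0,n)$, and then Corollary \ref{gre_per_cor}. The only difference is that you spell out the bookkeeping (the choice $T=(1-\Delta)^{d+k+1}$, the bound $\norm{\nu_n}\leq\norm{\Phi_{d,k}}_1\norm{\mu-\mu_n}_1\to 0$) that the paper leaves implicit.
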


Upon inspection, one can can see that we have to assume a higher order of smoothness on the functions being approximated than the rate of approximation.  Notice that the approximated functions $f$ are assumed to be in $L^{2d+2k+2,p}$, but the rate of approximation is only $h_X^{2k}$. Despite this shortcoming, the derived rate is better than what is currently known in certain cases.  In \cite{r:wen:2}, Wendland determines a rate of $h_X^{(d+2k+1)/2-d(1/2-1/p)_+}$ for bounding the $L^p(\Omega)$ error in approximating functions from $W^{(d+2k+1)/2,2}(\Omega)$, where $\Omega$ is a bounded domain in $\mathbb{R}^d$.  First, notice that when approximating arbitrarily smooth functions and $1\leq p \leq 2$, Theorem \ref{thm:wend:rd} provides a better rate as long as $k>(d+1)/2$.  However, the improvement is most noticeable when $p$ is close to $\infty$.  Also note that the prior rate suffers the same drawback as Theorem \ref{thm:wend:rd}; when $p=1$, the $L^1$ smoothness needed to be in $W^{(d+2k+1)/2,2}(\Omega)$ is greater than $d+2k+1$.

\subsection{Improved Estimates for the Wendland Functions}

In the generality of Theorem \ref{thm:wend:rd} the assumed smoothness is greater than the rate of approximation.  One source of this disparity was that when verifying Property 1', we compared $\Phi_{d,k}$ to the Green's function $G_{2d+2k+1}$ which has a Fourier transform with a faster rate of decay.  In this section, we propose an alternate method and show that for $d=1$ we have $\Phi_{1,k}=G_{2k+2}*\mu$ for some finite Borel measure $\mu$ and hence improve the error estimates.
Notice that the Fourier transforms of $\Phi_{1,k}$ and $G_{2k+2}$ have the same rate of decay, so in this situation, we are no longer losing orders of approximation from the perturbation technique.

  Our approach to verifying Property 1' will be to view $M(\mathbb{R})$ as a Banach algebra and use an invertibility result of Benedetto.
The first step is to show that $\hat{\Phi}/\hat{G}$ is the Fourier transform of some $\mu \in M(\mathbb{R})$.  Then the following theorem will give conditions for $\hat{\mu}^{-1}=\hat{G}/\hat{\Phi}$ being the Fourier transform of an element of $M(\mathbb{R})$.  Recall the decomposition of measures:  a Borel measure $\mu$ can be written as $\mu_a+\mu_s+\mu_d$.  Using this notation, we state the following theorem of Benedetto. Note that the author proved this theorem in a more general setting with $\mathbb{R}$ replaced by an arbitrary locally compact Abelian group.

\begin{theorem}\label{benn}\textbf{\em (\cite[Theorem 2.4.4]{r:ben})}
  Let $\mu \in M(\mathbb{R})$ such that $\abs{\hat{\mu}}$ never vanishes and
\begin{equation*}
 \norm{\mu_s} < \inf_{x \in \mathbb{R}}\abs{ \hat{\mu}_d(x)}. 
\end{equation*}
Then $\hat{\mu}^{-1}$ is the Fourier transform of an element of $M(\mathbb{R})$.
\end{theorem}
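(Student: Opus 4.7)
My plan is to interpret the conclusion through Gelfand theory. The space $(M(\mathbb{R}), *)$ is a commutative unital Banach algebra with unit $\delta_0$, so $\mu$ is invertible in it if and only if its Gelfand transform $\widetilde{\mu}$ never vanishes on the maximal ideal space $\Delta := \Delta(M(\mathbb{R}))$. Since Fourier-transforming $\mu * \nu = \delta_0$ yields $\hat{\mu}\hat{\nu} = 1$, producing such a $\nu \in M(\mathbb{R})$ is precisely what ``$\hat{\mu}^{-1}$ is the Fourier transform of an element of $M(\mathbb{R})$'' means. The proof therefore reduces to verifying $\widetilde{\mu}(\phi) \neq 0$ for every $\phi \in \Delta$.

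The key structural ingredient is that $L^1(\mathbb{R})$ is a closed ideal in $M(\mathbb{R})$, which dichotomizes characters $\phi \in \Delta$. If $\phi|_{L^1(\mathbb{R})}$ is not identically zero, then an approximate-identity argument---pick $f \in L^1(\mathbb{R})$ with $\phi(f) \neq 0$ and use $\phi(\mu)\phi(f) = \phi(\mu * f) = \widehat{\mu * f}(\xi) = \hat{\mu}(\xi)\hat{f}(\xi)$, where $\xi \in \mathbb{R}$ is the point with $\phi|_{L^1} = \hat{\cdot}(\xi)$---gives $\phi(\mu) = \hat{\mu}(\xi)$, which is nonzero by the first hypothesis.

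The substantive case is $\phi|_{L^1(\mathbb{R})} \equiv 0$. Then $\phi(\mu_a) = 0$ (since $\mu_a$ is absolutely continuous and hence identified with an $L^1$ function), so $\phi(\mu) = \phi(\mu_d) + \phi(\mu_s)$. The discrete measures form a closed subalgebra $M_d(\mathbb{R}) \cong \ell^1(\mathbb{R}_d)$, where $\mathbb{R}_d$ denotes $\mathbb{R}$ with the discrete topology; its maximal ideal space is the Bohr compactification $b\mathbb{R}$, and the Gelfand transform of $\mu_d$ in this subalgebra is the unique continuous (almost-periodic) extension of $\hat{\mu}_d$ from $\mathbb{R}$ to $b\mathbb{R}$. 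Since $\mathbb{R}$ is dense in $b\mathbb{R}$ and the extension is continuous, $\abs{\phi(\mu_d)} \geq \inf_{x \in \mathbb{R}} \abs{\hat{\mu}_d(x)}$. Combined with the elementary bound $\abs{\phi(\mu_s)} \leq \norm{\phi}\norm{\mu_s} = \norm{\mu_s}$, the triangle inequality and the second hypothesis yield
\begin{equation*}
\abs{\phi(\mu)} \geq \inf_{x \in \mathbb{R}} \abs{\hat{\mu}_d(x)} - \norm{\mu_s} > 0.
\end{equation*}

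The main obstacle is the description of $\Delta(M(\mathbb{R}))$: unlike for $L^1(\mathbb{R})$ alone, the spectrum strictly exceeds $\mathbb{R}$ (the Wiener--Pitt phenomenon), so the hypothesis $\abs{\hat{\mu}} > 0$ on $\mathbb{R}$ by itself would be insufficient. The gap condition on $\norm{\mu_s}$ is precisely what controls $\phi(\mu)$ on the ``exotic'' characters in $\Delta \setminus \mathbb{R}$, and the interplay between the ideal $L^1(\mathbb{R})$ and the subalgebra $M_d(\mathbb{R}) \cong \ell^1(\mathbb{R}_d)$ inside $M(\mathbb{R})$---together with the identification of the latter's spectrum with $b\mathbb{R}$---is what makes the estimate go through.
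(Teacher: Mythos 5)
The paper does not prove this statement; it is imported verbatim from Benedetto (\cite[Theorem 2.4.4]{r:ben}), so there is no internal proof to compare against. Your argument is, however, a correct and complete rendering of the standard proof of this Wiener--Pitt-type criterion, and it is essentially the one found in Benedetto and in Rudin's \emph{Fourier Analysis on Groups}. All the load-bearing steps check out: invertibility of $\mu$ in the unital Banach algebra $(M(\mathbb{R}),*)$ is equivalent to the nonvanishing of the Gelfand transform on $\Delta(M(\mathbb{R}))$; the dichotomy according to whether $\phi$ annihilates the closed ideal $L^1(\mathbb{R})$ is the right one; in the non-annihilating case $\phi|_{L^1}$ is a character of $L^1$, hence evaluation of $\hat{\cdot}$ at some $\xi\in\mathbb{R}$, and the identity $\phi(\mu)\phi(f)=\phi(\mu*f)$ forces $\phi(\mu)=\hat{\mu}(\xi)\neq 0$; in the annihilating case $\phi(\mu_a)=0$, the restriction of $\phi$ to the unital closed subalgebra $M_d(\mathbb{R})\cong\ell^1(\mathbb{R}_d)$ is a character given by a point of $b\mathbb{R}=\widehat{\mathbb{R}_d}$, and density of $\mathbb{R}$ in $b\mathbb{R}$ together with continuity of the Gelfand transform gives $\abs{\phi(\mu_d)}\geq\inf_{x\in\mathbb{R}}\abs{\hat{\mu}_d(x)}$, while $\abs{\phi(\mu_s)}\leq\norm{\mu_s}$ since $\norm{\phi}=1$. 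The only cosmetic caveat is the paper's normalization of the Fourier transform (the factor $(2\pi)^{-d/2}$ makes $\widehat{\mu*\nu}$ a constant multiple of $\hat{\mu}\hat{\nu}$), so ``$\hat{\mu}^{-1}$ is the Fourier transform of a measure'' and ``$\mu$ is invertible in $M(\mathbb{R})$'' agree only up to that constant; this does not affect the substance of your argument.
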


We now fix $k\geq 1$, and we must select a Green's function $G$ and corresponding differential operator $T$ so that $\hat{\Phi}_{1,k}/\hat{G}$ satisfies the necessary conditions. Considering the decay of $\hat{\Phi}_{1,k}$, we choose $\hat{G}=(1+\abs{\cdot}^{2k+2})^{-1}$ and $T=(1+(-1)^{k+1}\Delta^{k+1}$). Note that for $1 \leq p <\infty$ we have $W(L^p(\mathbb{R}),T)$ is equivalent to the spaces $W^{2k+2,p}(\mathbb{R})$ and $L^{2k+2,p}(\mathbb{R})$.  

With this Green's function $G$, we can verify the hypotheses of Theorem \ref{benn}.  By Proposition \ref{pr:wend_ft}, we know that for some $B_k>0$ and $C \in \mathbb{R}$ we have

\begin{equation*}
 \frac{\hat{\Phi}_{1,k}}{\hat{G}}(x) = \hat{\Phi}_{1,k}+ B_k\left(\frac{1}{k!}+\frac{(-1)^{k+1}}{k!2^k}\cos(x)-C\frac{\sin(x)}{x}+\hat{h}(x)\right)
\end{equation*}
where $\hat{h}$ is the Fourier transform of an $L^1$ function, $h$.  The fact that $\hat{\Phi}_{1,k}/\hat{G}$ is positive follows from the positivity of $\hat{\Phi}_{1,k}$, cf. \cite[Chapter 10]{r:wen:2}.  Now let $\mu$ be the measure defined by 

\begin{equation*}
 \mu = \Phi_{1,k} + \sqrt{2\pi}B_k\left(\frac{1}{k!}\delta_0 + \frac{(-1)^{k+1}}{k!2^{k+1}}(\delta_{-1}+\delta_1) -\frac{C}{2}  \raisebox{\depth}{\(\chi\)}_{[-1,1]}+\frac{1}{\sqrt{2\pi}}h\right).
\end{equation*}
Then $\mu$ is a finite Borel measure with $\hat{\mu}=\hat{\Phi}_{1,k}/\hat{G}$.  Additionally, $\mu$ has no singular continuous part, and 
\begin{equation*}
 \abs{\hat{\mu}_d(\omega)}= B_k\left(\frac{1}{k!}+\frac{(-1)^{k+1}}{k!2^k}\cos(\omega)\right) \geq \frac{B_k}{2k!}
\end{equation*}
Therefore Theorem \ref{benn} implies that $1/\hat{\mu}= \hat{G}/\hat{\Phi}_{1,k}$ is the Fourier transform of some finite Borel measure $\tilde{\mu}$.  By letting $\tilde{\mu}_n$ be the restriction of $\tilde{\mu}$ to $B(0,n)$, we can see that 
\begin{eqnarray*}
 G &=& \Phi_{1,k}*\tilde{\mu} \\
   &=& \Phi_{1,k}*\tilde{\mu}_n + \Phi_{1,k}*(\tilde{\mu}-\tilde{\mu}_n),
\end{eqnarray*}
and hence corollary \ref{gre_per_cor} applies.
 
\begin{theorem}
 Let $\Phi_{1,k}$ be as above, and let $X$ be a quasi-uniform subset of $\mathbb{R}$. Then for $f \in{L^{2k+2,p}(\mathbb{R})}$ and $1 \leq p <\infty$, we have
\begin{equation*}
 \mathcal{E}(f, S_X(\Phi_{1,k}))_p \leq C h_X^{2k} \abs{f}_{L^{2k+2,p}(\mathbb{R})}
\end{equation*}
\end{theorem}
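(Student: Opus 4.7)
The plan is to obtain the stated bound as a direct application of Corollary~\ref{gre_per_cor} with the Green's function $G$ satisfying $\hat G(\omega)=(1+|\omega|^{2k+2})^{-1}$ and the differential operator $T=1+(-1)^{k+1}\Delta^{k+1}$ selected in the paragraph preceding the theorem. Most of the work has already been done; the proof amounts to assembling Property~1', Property~2, and the identification of $W(L^p(\mathbb{R}),T)$ with $L^{2k+2,p}(\mathbb{R})$.

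First I would observe that Property~2 with $\kappa=2k$ and $d=1$ has already been checked in the preceding Wendland subsection. The argument uses the polynomial reproducing functional $\lambda_t$ combined with a $(2k-1)$-degree Taylor expansion of $\Phi_{d,k}$ together with the compact support of the Wendland function, and specializes to $d=1$ with no change.

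Next, Property~1' for the chosen $G$ is supplied by the construction immediately preceding the theorem. Benedetto's Theorem~\ref{benn} produces a finite Borel measure $\tilde\mu$ on $\mathbb{R}$ with $\hat{\tilde\mu}=\hat G/\hat{\Phi}_{1,k}$, so that $G=\Phi_{1,k}*\tilde\mu$. Truncating $\tilde\mu_n:=\tilde\mu\bigl|_{B(0,n)}$ yields a sequence of compactly supported measures of uniformly bounded total variation, and
\begin{equation*}
 \nu_n:=G-\Phi_{1,k}*\tilde\mu_n=\Phi_{1,k}*(\tilde\mu-\tilde\mu_n)
\end{equation*}
is an $L^1$ function, hence identifiable with an absolutely continuous finite Borel measure, whose norm is at most $\|\Phi_{1,k}\|_1\|\tilde\mu-\tilde\mu_n\|\to 0$ by the generalization of Young's inequality~\eqref{eq:young} with $p=1$. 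This is precisely the hypothesis of Property~1'.

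With both properties in hand, Corollary~\ref{gre_per_cor} yields
\begin{equation*}
 \mathcal{E}(f,S_X(\Phi_{1,k}))_p\leq Ch_X^{2k}|f|_{W(L^p(\mathbb{R}),T)}
\end{equation*}
for every $f\in W(L^p(\mathbb{R}),T)$. Invoking the norm equivalence $W(L^p(\mathbb{R}),T)=L^{2k+2,p}(\mathbb{R})$ for $1\leq p<\infty$, already quoted in the paragraph where $T$ was chosen, replaces the seminorm on the right by $|f|_{L^{2k+2,p}(\mathbb{R})}$ and finishes the proof. The genuine obstacle in this chain lies not in the theorem itself but in the preceding application of Benedetto's theorem: one must confirm both that $|\hat\mu|$ never vanishes, which uses positivity of $\hat{\Phi}_{1,k}$, and that the strict inequality $\|\mu_s\|<\inf_\omega|\hat\mu_d(\omega)|$ holds. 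Proposition~\ref{pr:wend_ft} makes both feasible by pinpointing $\mu$ exactly: the singular continuous part vanishes, while the discrete part contributes a uniformly dominant term of modulus at least $B_k/(2\cdot k!)$, comfortably controlling the absolutely continuous and $\sin(x)/x$-type contributions.
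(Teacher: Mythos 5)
Your proposal is correct and follows essentially the same route as the paper: Property~2 with $\kappa=2k$ from the Wendland subsection, Property~1' via Benedetto's theorem applied to $\hat{\mu}=\hat{\Phi}_{1,k}/\hat{G}$ (using Proposition~\ref{pr:wend_ft} to rule out a singular continuous part and to bound $|\hat{\mu}_d|$ from below), truncation of $\tilde{\mu}$ to $B(0,n)$, and then Corollary~\ref{gre_per_cor} together with the identification $W(L^p(\mathbb{R}),T)=L^{2k+2,p}(\mathbb{R})$. Your explicit verification that $\nu_n=\Phi_{1,k}*(\tilde{\mu}-\tilde{\mu}_n)$ has total variation tending to zero via Young's inequality is a detail the paper leaves implicit, but it is the intended argument.
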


By examining this example, we can determine what the analogous result would be for the remaining Wendland functions.  If $k\geq 1$, then it is known that  
\begin{equation*}
 c_1(1+\norm{\omega}_2^2)^{-(d+2k+1)/2} \leq \hat{\Phi}_{d,k}(\omega) \leq c_2(1+\norm{\omega}_2^2)^{-(d+2k+1)/2}
\end{equation*}
for some positive constants $c_1$ and $c_2$, cf. \cite[Theorem 10.35]{r:wen:2}.  Therefore, we could choose $G$ to be the Sobolev spline of order $d+2k+1$, so that $\hat{G}$ and $\hat{\Phi}_{d,k}$ have similar decay.  In this situation, $\hat{G}/\hat{\Phi}_{d,k}$ is a continuous function that is bounded above and bounded away from 0.  If we could show that $\hat{G}/\hat{\Phi}_{d,k}$ is the Fourier transform of an element of $M(\mathbb{R}^d)$, then we could apply corollary \ref{gre_per_cor} to obtain the following.

\begin{conjecture}
 Let $\Phi_{d,k}$ be as above with $k\geq1$, and let $X$ be a quasi-uniform subset of $\mathbb{R}^d$. Then for $f \in{L^{d+2k+1,p}(\mathbb{R}^d)}$ and $1 \leq p <\infty$, we have
\begin{equation*}
 \mathcal{E}(f, S_X(\Phi_{d,k}))_p \leq C h_X^{2k} \abs{f}_{L^{d+2k+1,p}(\mathbb{R}^d)}.
\end{equation*}
\end{conjecture}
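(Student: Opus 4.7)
The strategy is to apply Corollary \ref{gre_per_cor} with Green's function $G = G_{d+2k+1}$ and operator $T = (1-\Delta)^{(d+2k+1)/2}$, so that $W(L^p(\mathbb{R}^d),T) = L^{d+2k+1,p}(\mathbb{R}^d)$. Property 2 with $\kappa = 2k$ was already established for every $\Phi_{d,k}$ at the start of Section 3.2, relying only on local polynomial reproduction and the compact support of $\Phi_{d,k}$ and in particular not on the choice of Green's function. The entire burden therefore falls on verifying Property 1': producing $\tilde\mu \in M(\mathbb{R}^d)$ with
\[
 G_{d+2k+1} \;=\; \Phi_{d,k} * \tilde\mu,
\]
after which the truncations $\tilde\mu_n := \tilde\mu|_{B(0,n)}$ supply the perturbation sequence required by Property 1'.

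I would first build the ``forward'' measure $\mu$ with $\hat\mu = \hat\Phi_{d,k}/\hat G$ and then invert it in the Banach algebra $M(\mathbb{R}^d)$ via the version of Benedetto's Theorem \ref{benn} that holds on arbitrary locally compact Abelian groups. For odd $d$, Proposition \ref{pr:wend_ft_1} gives an explicit radial formula for $\hat\Phi_{d,k}$; multiplying by $\hat G^{-1} = (1+r^2)^{(d+2k+1)/2}$ yields a polynomial tail, finitely many oscillatory pieces of the form $r^{2j}\cos r$ and $r^{2j-1}\sin r$ times rational factors, and an $L^1$ remainder. Radially inverse-Fourier-transforming each piece separately should exhibit $\mu$ as a finite combination of a point mass at the origin, surface measures on $\{|x|=1\}$ (coming from $\cos r$ and $\sin r$ through the Bessel-transform identity used in the proof of the $d$-dimensional lemma), a rotation-invariant absolutely continuous piece supported on $\{|x|\leq 1\}$, and an $L^1(\mathbb{R}^d)$ remainder — exactly the higher-dimensional analogue of the one-dimensional construction.

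To invert $\mu$, I would identify the components $\mu_a,\mu_s,\mu_d$ and verify the two hypotheses of Theorem \ref{benn}. Positivity of both $\hat\Phi_{d,k}$ and $\hat G$ gives $\hat\mu>0$ everywhere, so $|\hat\mu|$ never vanishes. The construction above produces $\mu_s=0$, so the hypothesis $\|\mu_s\|<\inf|\hat\mu_d|$ reduces to the lower bound $\inf_\omega|\hat\mu_d(\omega)|>0$, where $\hat\mu_d$ is the finite trigonometric sum arising from the point mass and sphere contributions. As in the one-dimensional argument, this bound should follow from a single leading constant term dominating the finitely many bounded oscillatory terms, mirroring the estimate $|\hat\mu_d(\omega)|\geq B_k/(2k!)$ from the $d=1$ case. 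Theorem \ref{benn} then yields $\tilde\mu\in M(\mathbb{R}^d)$ with $\hat{\tilde\mu}=\hat G/\hat\Phi_{d,k}$, whence Corollary \ref{gre_per_cor} finishes the argument.

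The principal obstacle, and the reason the statement is offered only as a conjecture, lies in carrying out this scheme uniformly in $d$. For odd $d\geq 3$ the explicit Fourier transform in Proposition \ref{pr:wend_ft_1} supplies the raw ingredients, but transferring the one-variable partial-fraction bookkeeping to radial functions on $\mathbb{R}^d$ requires delicate control over the sphere-surface contributions that arise from the inverse Hankel transforms of radial trigonometric functions: one must verify that the resulting object is a genuine finite Borel measure (not merely a tempered distribution) and that its discrete part is a trigonometric polynomial with a positive lower bound. For even $d$ the situation is harder still, because no closed-form Fourier transform of $\Phi_{d,k}$ is available; in that regime the approach above breaks down at the first step and one would need a wholly different route, for instance a Wiener-algebra localization exploiting the compact support of $\Phi_{d,k}$ together with the two-sided decay bounds $c_1(1+\|\omega\|_2^2)^{-(d+2k+1)/2}\leq \hat\Phi_{d,k}(\omega)\leq c_2(1+\|\omega\|_2^2)^{-(d+2k+1)/2}$.
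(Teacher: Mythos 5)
The statement you were asked to prove is stated in the paper as a \emph{conjecture}: the paper offers no proof, only the observation that Property 2 holds with $\kappa=2k$, that $G_{d+2k+1}$ is the natural Green's function because $\hat G$ and $\hat\Phi_{d,k}$ have matching two-sided decay, and that Corollary \ref{gre_per_cor} would give the result \emph{if} one could show that $\hat G/\hat\Phi_{d,k}$ is the Fourier transform of an element of $M(\mathbb{R}^d)$. Your proposal reproduces exactly this reduction, and you are right both that the entire burden falls on Property 1' and that this unresolved step is why the statement is only conjectural. To that extent your answer agrees with the paper.

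However, the specific route you sketch for closing the gap when $d\geq 3$ contains a concrete error that is essentially \emph{the} obstruction. In $\mathbb{R}^d$ with $d\geq 2$, surface measure on $\{|x|=1\}$ has no atoms and is singular with respect to Lebesgue measure, so in the decomposition $\mu=\mu_a+\mu_s+\mu_d$ it belongs to $\mu_s$, not $\mu_d$; your claims that the construction produces $\mu_s=0$ and that $\hat\mu_d$ is ``the finite trigonometric sum arising from the point mass and sphere contributions'' are therefore both false for $d\geq 2$. Worse, the sphere contributions cannot play the role you assign them: $\widehat{\sigma_{S^{d-1}}}(\omega)=O(|\omega|^{-(d-1)/2})$ decays at infinity, whereas the term $\tfrac{2\beta_m}{m!}\cos(r)$ that survives in $(1+r^2)^{m+1}r^{-3m-2}f_m(r)$ does not decay, so no combination of a point mass at the origin, surface measures on spheres, and $L^1$ pieces can have $\hat\Phi_{d,k}/\hat G$ as its Fourier transform. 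If a forward measure $\mu$ exists at all, its non-decaying oscillation must be carried by a genuinely singular continuous part, which (i) is not exhibited by your construction and (ii) forces $\norm{\mu_s}$ to be bounded below by a constant times $\abs{\beta_m}/m!$, so the hypothesis $\norm{\mu_s}<\inf\abs{\hat\mu_d}$ of Theorem \ref{benn} becomes a delicate quantitative question rather than the automatic positivity check it is in one dimension (where $\delta_{\pm 1}$ really are atoms). This is precisely why the one-variable bookkeeping does not transfer and why the paper stops at a conjecture; your closing paragraph correctly senses the difficulty, but the body of your argument asserts the $d=1$ picture in a regime where it does not hold.
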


\end{document}